\newcommand{\norm}[1]{\Vert #1 \Vert}
\newtheorem{theorem}{Theorem}
\newcommand{\Li}{\text{Li}_2}
\begin{document}
\title{Estimates for the norms of products of sines and cosines}
\subjclass[2010]{42A05, 26D05, 40A25}
\keywords{trigonometric polynomials, pentagonal number theorem, $q$-series, integer partitions, Catalan's constant}
\author{Jordan Bell}
\email{jordan.bell@gmail.com}
\address{Department of Mathematics, University of Toronto, Toronto, Ontario, Canada}
\date{\today}
\begin{abstract}
In this paper we prove asymptotic formulas for the $L^p$ norms of $P_n(\theta)=\prod_{k=1}^n (1-e^{ik\theta})$ and $Q_n(\theta)=\prod_{k=1}^n (1+e^{ik\theta})$. These products can be expressed using $\prod_{k=1}^n \sin\Big(\frac{k\theta}{2}\Big)$ and $\prod_{k=1}^n \cos\Big(\frac{k\theta}{2}\Big)$ respectively. We prove an estimate for $P_n$ at a point near where its maximum occurs. Finally, we give an asymptotic formula for the maximum of the Fourier coefficients of $Q_n$.
\end{abstract}
\maketitle

\section{Introduction}
Euler's pentagonal number theorem is the expansion
\[
\prod_{k=1}^\infty (1-z^k) = \sum_{k=-\infty}^\infty (-1)^k z^{k(3k-1)/2},
\]
for $|z|<1$. Euler's discovery and proof of it are told in
detail in \cite{MR2651525}. The coefficients in the power series expansion of $\prod_{k=1}^\infty (1-z^k)$ have a combinatorial interpretation that can be used 
to prove the pentagonal number theorem  \cite[pp. 286--287, \S 19.11]{MR2445243}.
One can see that
\[
\prod_{k=1}^\infty (1+z^k)=\sum_{k=0}^\infty q(k) z^k,
\]
where $q(k)$ is the number of ways to write $k$ as a sum of distinct positive integers.

In this paper we are concerned with the behavior on the unit circle of the
partial products of the above infinite products. (The distribution of the zeros of the partial sums of the above infinite series is studied
in \cite{MR1907123}.) Let $\mathbb{T}=\mathbb{R}/2\pi\mathbb{Z}$. We
define $P_n:\mathbb{T} \to \mathbb{C}$ by
\[
P_n(\theta)=\prod_{k=1}^n (1-e^{ik\theta}),
\]
and we define $Q_n:\mathbb{T} \to \mathbb{C}$ by
\[
Q_n(\theta)=\prod_{k=1}^n (1+e^{ik\theta}).
\]

One can check that
\begin{equation}
P_n(\theta)=(-2i)^n e^{\frac{iN\theta}{2}} \prod_{k=1}^n \sin\Big(\frac{k\theta}{2} \Big), \qquad
N=\frac{n(n+1)}{2},
\label{sineproduct}
\end{equation}
and that
\begin{equation}
Q_n(\theta)= 2^n e^{\frac{iN\theta}{2}} \prod_{k=1}^n \cos\Big(\frac{k\theta}{2} \Big),
\qquad
N=\frac{n(n+1)}{2}.
\label{cosineproduct}
\end{equation}

For $f:\mathbb{T} \to \mathbb{C}$, we define the Fourier coefficients of $f$ by
\[
\hat{f}(n)=\frac{1}{2\pi}\int_0^{2\pi} f(\theta)e^{-in\theta}d\theta.
\]
For $1 \leq p < \infty$, we define the $L^p$ norm of $f$ by
\[
\norm{f}_p=\Big(\frac{1}{2\pi} \int_0^{2\pi} |f(\theta)|^p d\theta\Big)^{1/p},
\]
and we define the $\ell^p$ norm of $\hat{f}$ by
\[
\norm{\hat{f}}_p=\Big(\sum_{k=-\infty}^\infty |\hat{f}(k)|^p \Big)^{1/p}.
\]

We deal with $P_n$ in \S \ref{Pnsection}, and we deal with $Q_n$ in \S \ref{Qnsection}. We give combinatorial interpretations of their Fourier coefficients, prove asymptotic formulas for their $L^p$
norms,
present some other approaches for bounding their norms,
and give an asymptotic formula for the $\ell^\infty$ norm of the Fourier coefficients of $Q_n$.
We also prove an estimate
for $P_n$ at a point near where its maximum occurs. In \S \ref{conclusion} we discuss what remains to be shown about these products.

\section{$P_n$}
\label{Pnsection}
The Fourier coefficients of $P_n$ have a combinatorial interpretation.
One can see that
\[
\widehat{P_n}(k)=e_{n,k}-o_{n,k},
\]
where $e_{n,k}$ is the number of ways in which $k$ can be written as a sum
of an even number of positive integers that are distinct and each $\leq n$
and $o_{n,k}$ is the number of ways in which $k$ can be written as a sum
of an odd number of positive integers that are distinct and each $\leq n$. For example,
one can check that
$6+5+2+1,6+4+3+1,5+4+3+2$ are the only ways to write 14 as a sum of an even number of
positive integers that are distinct and each $\leq 6$, so $e_{6,14}=3$, and that
$6+5+3$ is the only way to write 14 as a sum of an odd number of positive integers that are distinct and
each $\leq 6$, so $e_{6,14}=1$. Thus $\widehat{P_6}(14)=2$.

We see from \eqref{sineproduct} that $|P_n(\theta)|=\prod_{k=1}^n 2|\sin\Big(\frac{k\theta}{2} \Big)|$. In Figure \ref{sinefigure} we plot $\prod_{k=1}^{10} 2|\sin (k\theta)|$ for $0 \leq \theta \leq \frac{\pi}{2}$.

\begin{figure}
\includegraphics{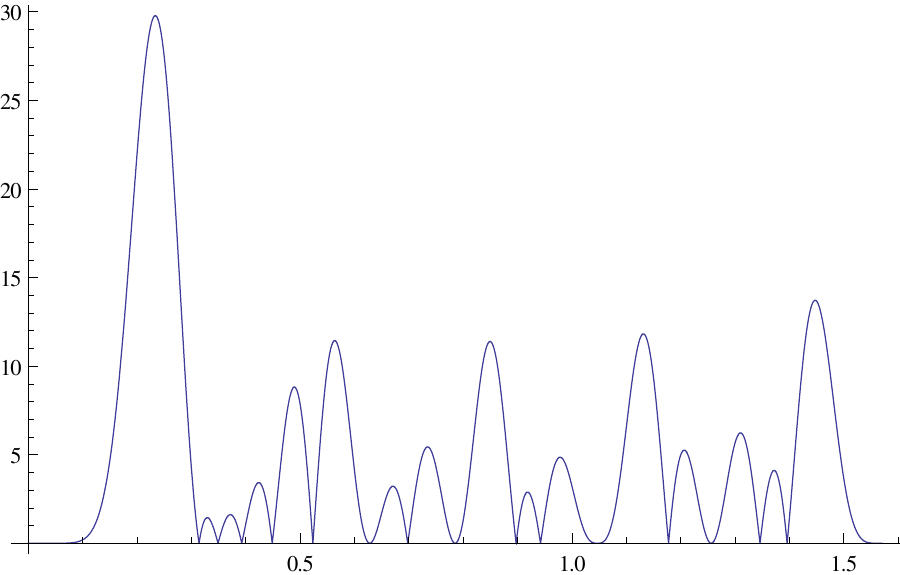}
\caption{$\prod_{k=1}^{10} 2|\sin (k\theta)|$ for $0 \leq \theta \leq \frac{\pi}{2}$}
\label{sinefigure}
\end{figure}

Of course, $P_n(0)=0$. Aside from $\theta=0$ we can explicitly evaluate $P_n(\theta)$ for certain
other $\theta$. For any $h$ such that $\gcd(n+1,h)=1$, we have $z^{n+1}-1=\prod_{k=1}^{n+1} (z-e^{\frac{2\pi ihk}{n+1}})$.
Since $z^{n+1}-1=(z-1)(z^n+\cdots+z+1)$, we get
$z^n+\cdots+z+1=\prod_{k=1}^n (z-e^{\frac{2\pi ihk}{n+1}})$ and setting $z=1$ gives
\[
P_n\Big(\frac{2\pi h}{n+1}\Big)=n+1
\]
for each $h$ such that $\gcd(n+1,h)=1$. In particular this gives us
$\norm{P_n}_{\infty} \geq n+1$.

Wright \cite{MR0163891}, using work of Sudler \cite{MR0163890}, proves the following theorem, which gives an asymptotic formula
for $\norm{\widehat{P}_n}_\infty$.

\begin{theorem}[Wright]
We have
\[
\norm{\widehat{P_n}}_\infty \sim \frac{B e^{Kn}}{n},
\]
where $B$ and $K$ are defined as follows (see Sudler \cite{MR0163890}):
\[
K=\log 2 + \max_{0<w<1} \Big( w^{-1}\int_0^w \log \sin (\pi t) dt \Big)
\]
and 
\[
B=2e^K\Big(1-\frac{1}{4}e^{2K}\Big)^{-1/4}.
\]
\label{wright}
\end{theorem}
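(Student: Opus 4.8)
The plan is to estimate a single Fourier coefficient by the saddle-point (steepest-descent) method and then to locate the index $m$ that maximizes $|\widehat{P_n}(m)|$. Starting from
\[
\widehat{P_n}(m)=\frac{1}{2\pi}\int_{-\pi}^{\pi} P_n(e^{i\theta})e^{-im\theta}\,d\theta,
\]
I would first observe, using \eqref{sineproduct}, that $|P_n(e^{i\theta})|=2^n\prod_{k=1}^n|\sin(k\theta/2)|$ is exponentially large only when $\theta$ is of order $1/n$. The substitution $\theta=2\pi\xi/n$ exposes this scale: for $0<\xi<1$ every factor is positive and
\[
\frac{1}{n}\log\big|P_n(e^{2\pi i\xi/n})\big|\longrightarrow G(\xi),\qquad G(\xi)=\int_0^1\log\big(2\sin(\pi\xi u)\big)\,du.
\]
A change of variable gives $G(\xi)=\log 2+\xi^{-1}\int_0^\xi\log\sin(\pi v)\,dv$, so that $\max_{0<\xi<1}G(\xi)$ is exactly the constant $K$ of Theorem \ref{wright}. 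This is the conceptual heart of the matching with Sudler's constant.

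Next I would identify the dominant contribution. Writing $\Phi(\theta)=\log P_n(e^{i\theta})-im\theta$, the saddle equation $\Phi'(\theta)=0$ splits: its real part $(\log|P_n|)'(\theta)=0$ is independent of $m$ and is solved at the magnitude peak $\theta^*\approx 2\pi\xi^*/n$, where $\xi^*$ maximizes $G$; its imaginary part then fixes $m$. Because all the factors $\sin(k\theta/2)$ are positive for $\theta\in(0,2\pi/n)$, the phase of $P_n$ there comes only from $e^{iN\theta/2}$ with $N=n(n+1)/2$, and stationarity forces the dominant index to be the central one, $m\approx N/2$; off-centre indices correspond to complex saddles with strictly smaller $|e^{\Phi}|$. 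This should show that $\norm{\widehat{P_n}}_\infty$ is attained near $m=N/2$.

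It then remains to pin down the prefactor. Since $\Phi''(\theta^*)=-\tfrac14\sum_{k}k^2\csc^2(k\theta^*/2)\sim -c\,n^3$ with $c=\tfrac14\int_0^1 u^2\csc^2(\pi\xi^* u)\,du$, Laplace's method contributes a Gaussian width of order $n^{-3/2}$. Against this I would set a careful evaluation of $|P_n(e^{i\theta^*})|$ itself: writing $\log(2\sin(\pi\xi^* u))=\log u+h(u)$ with $h$ smooth on $[0,1]$ and applying Euler--Maclaurin together with Stirling's formula to the singular piece $\sum_k\log(k/n)=\log(n!)-n\log n$, one finds
\[
\log\big|P_n(e^{i\theta^*})\big|=Kn+\tfrac12\log n+O(1),
\]
an extra factor $\sqrt n$. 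Combining the $\sqrt n$ from the magnitude with the $n^{-3/2}$ from the width, together with the factor $2$ from the symmetric peak at $-\theta^*$, yields the asserted order $e^{Kn}/n$, and collecting the explicit constants $c$, $\xi^*$, and the boundary values $h(0)=\log(2\pi\xi^*)$, $h(1)=\log(2\sin\pi\xi^*)$ should assemble the constant $B$.

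The main obstacle is precisely this constant, and in particular the factor $(1-\tfrac14 e^{2K})^{-1/4}$: it does not appear at the level of the crude Gaussian approximation but must emerge from resumming the subleading corrections to $\log P_n$ (a theta-type product), where the optimality relation $G'(\xi^*)=0$ is used to simplify $c$ and $\xi^*\sin(\pi\xi^*)$ in terms of $e^{K}$. A second genuine difficulty is the phase bookkeeping. One computes $P_n(e^{i\theta^*})e^{-i(N/2)\theta^*}=(-i)^n\,|P_n(e^{i\theta^*})|$, so for $n$ even the real part is $\pm|P_n(e^{i\theta^*})|$, whereas for $n$ odd the leading real part nearly cancels and the extremal index, together with the true constant, must be recovered from the next order of the expansion. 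It is at exactly these two points that I expect to lean on the refined estimates of Sudler \cite{MR0163890} and Wright \cite{MR0163891}.
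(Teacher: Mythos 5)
The paper does not actually prove this statement; it is quoted from Wright \cite{MR0163891}, whose argument rests on Sudler's analysis \cite{MR0163890} of the product $\prod_{k=1}^n 2|\sin(\pi k\theta)|$. Your plan is, in outline, the same circle--method/saddle--point strategy Wright follows: localize to $\theta\asymp 1/n$, identify $K=\log 2+\max_w w^{-1}\int_0^w\log\sin(\pi t)\,dt$ via the Riemann--sum limit, extract the $\sqrt n$ enhancement of the peak from $\sum_k\log(k/n)$ via Stirling, and integrate a Gaussian of width $n^{-3/2}$ against the stationary phase at $m\approx N/2$. These ingredients are all correct and consistent with what the paper reports elsewhere: Theorem \ref{Lpestimate} gives $\norm{P_n}_\infty\sim e^{Kn}BC(n/4\pi)^{1/2}$ (your ``$Kn+\tfrac12\log n+O(1)$''), and \S\ref{conclusion} records Wright's oscillatory asymptotic $\widehat{P_n}(k)=\frac{Be^{Kn}}{n}\cos(2\pi L)+o(e^{Kn}/n)$ for central $k$, which matches your phase analysis including the $(-i)^n$ bookkeeping.

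As a proof, however, there are gaps beyond the two you flag. The opening assertion that $|P_n|$ is exponentially large only for $\theta$ of order $1/n$ is the hardest step of the whole argument, not an observation: one must rule out competing peaks near other rationals $2\pi p/q$, and this global localization ($\Pi_n(\theta)=o(e^{Kn}/n)$ off a shrinking neighbourhood of $w_0/n$) is precisely Sudler's contribution. Note also that once Theorem \ref{Lpestimate} is available, the upper bound is immediate from $\norm{\widehat{P_n}}_\infty\le\norm{P_n}_1\sim Be^{Kn}/n$, so the real content is the matching lower bound at $m\approx N/2$; and since the central coefficients oscillate as $\cos(2\pi L)$, you must still observe that $L$ changes by $w_0/n$ per unit step in $k$, so some admissible $k$ makes $|\cos(2\pi L)|=1+o(1)$. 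Finally, your worry about the factor $\big(1-\tfrac14e^{2K}\big)^{-1/4}$ is resolvable without ``resumming subleading corrections'': the stationarity condition $G'(w_0)=0$ gives $\log\big(2\sin(\pi w_0)\big)=G(w_0)=K$, i.e.\ $e^K=2\sin(\pi w_0)$, whence $\big(1-\tfrac14e^{2K}\big)^{-1/4}=|\cos(\pi w_0)|^{-1/2}$; this emerges from the $O(1)$ Euler--Maclaurin constant at the peak together with the Gaussian width, whose constant $C^2=-\tfrac{\pi}{2w_0}\cot(\pi w_0)$ in Theorem \ref{Lpestimate} already carries the $\cos(\pi w_0)$.
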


Let $w_0$ be the (unique) $w \in (0,1)$ at which the maximum of $w^{-1}\int_0^w \log \sin(\pi t) dt$
occurs; doing integration by parts one can show that $w_0$ is the unique zero $w \in (0,1)$ of $\int_0^w t\cot(\pi t) dt$. We compute that $w_0=0.7912265710\ldots$, from which we get
 $K=0.1986176152\ldots$, so $e^K=1.219715476\ldots$ and 
$B=2.740222990\ldots$.

The constant $K$ in Theorem \ref{wright} is defined using the $\int_0^w \log \sin(\pi t) dt$,
and in the proof of Theorem \ref{bigpoint} we deal with  $\int_0^{\frac{3\pi}{4n}} \log \sin xdx$.
Milnor in the appendix to \cite{MR634431} shows how to use the integrals
$-\int_0^\theta \log|2\sin u|du$ to compute hyperbolic volumes.

Using the fact that $\norm{P_n}_\infty \leq \norm{\widehat{P_n}}_1 \leq
(N+1) \norm{\widehat{P_n}}_\infty$, one can show using Theorem \ref{wright} that $\lim_{n \to \infty} \norm{P_n}_{\infty}^{1/n}=e^K$.
Freiman and Halberstam \cite{MR937933} give a different proof of this.

One can show for a fixed $f \in L^\infty(\mathbb{T})$ that $\norm{f}_p$ is an increasing function
of $p$. So Theorem \ref{wright} gives us  for $1 \leq p \leq \infty$ that 
\[
\norm{P_n}_p \leq \norm{P_n}_\infty \leq \norm{\widehat{P_n}}_1 \leq
(N+1) \norm{\widehat{P_n}}_\infty \sim \frac{nB e^{Kn}}{2}.
\]
On the other hand,
\[
\norm{P_n}_p \geq \norm{P_n}_1 \geq \norm{\widehat{P_n}}_\infty \sim \frac{B e^{Kn}}{n}.
\]

In fact, the method of Wright's proof can be used to estimate the $L^p$ norms of $P_n$ for $1 \leq p
\leq \infty$. The following is an outline of how to prove this estimate.

\begin{theorem}
Let $C^2=-\frac{1}{2} \frac{\pi}{w_0} \cot(\pi w_0)$, $C>0$, where $w_0$ is the unique $w \in (0,1)$ at which the maximum of $w^{-1}\int_0^w \log \sin(\pi t) dt$ occurs, and take $K$ and $B$ as defined in
Theorem \ref{wright}.
For each $1 \leq p < \infty$ we have
\[
\norm{P_n}_p \sim \Big(2 \frac{\pi^{1/2}}{C p^{1/2} n^{3/2}}\Big)^{1/p} e^{Kn} BC
\Big( \frac{n}{4\pi}\Big)^{1/2},
\]
and for $p=\infty$ we have
\[
\norm{P_n}_\infty \sim e^{Kn} BC\Big( \frac{n}{4\pi}\Big)^{1/2}.
\]
\label{Lpestimate}
\end{theorem}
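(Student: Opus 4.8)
The plan is to adapt Wright's Laplace-type asymptotic analysis, which already controls $\norm{\widehat{P_n}}_\infty$, into a direct analysis of the integrals $\frac{1}{2\pi}\int_0^{2\pi}|P_n(\theta)|^p\,d\theta$. By \eqref{sineproduct} we have $|P_n(\theta)|=\prod_{k=1}^n 2\bigl|\sin(k\theta/2)\bigr|$, so the starting point is to write
\[
\norm{P_n}_p^p=\frac{1}{2\pi}\int_0^{2\pi}\exp\Bigl(p\sum_{k=1}^n \log\bigl|2\sin(k\theta/2)\bigr|\Bigr)\,d\theta.
\]
First I would identify where the integrand is maximized. The earlier discussion shows $\norm{P_n}_\infty^{1/n}\to e^K$ and that the relevant maximizer of $w^{-1}\int_0^w\log\sin(\pi t)\,dt$ is $w_0$; translating through the substitution that turns the Riemann sum $\frac1n\sum_{k=1}^n\log|2\sin(k\theta/2)|$ into $\int_0^w\log|2\sin(\pi t)|\,dt$ (with $w$ proportional to $n\theta$), the maximum of the exponent occurs at a point $\theta_n$ with $n\theta_n/(2\pi)\to w_0$. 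So the dominant contribution comes from a shrinking neighborhood of $\theta_n$ (and, by symmetry $\theta\mapsto 2\pi-\theta$, from its mirror image).

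The key steps are then: replace the sum $\sum_{k=1}^n\log|2\sin(k\theta/2)|$ by its integral approximation $n\,G(\theta)$ plus correction terms, where $G$ has a nondegenerate maximum value $K$ at $\theta_n$; this is exactly the Euler--Maclaurin / Sudler estimate that underlies Theorem \ref{wright}, and it is what produces the constant $B$ (the correction terms contribute the factor $\bigl(1-\tfrac14 e^{2K}\bigr)^{-1/4}$ and the power of $n$). Next, expand the exponent to second order around $\theta_n$: the vanishing first derivative leaves a quadratic term whose coefficient, after the scaling, is governed by $\frac{d^2}{dw^2}\bigl[w^{-1}\int_0^w\log\sin(\pi t)\,dt\bigr]$ at $w_0$, which by the integration-by-parts characterization of $w_0$ reduces to $\pi\cot(\pi w_0)/w_0$ — precisely $-2C^2$. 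Thus near $\theta_n$ the exponent behaves like $pKn-pC^2\cdot\tfrac{n}{\cdot}(\theta-\theta_n)^2\cdot(\text{const})$, giving a Gaussian of width $\sim (pn)^{-1/2}$. Applying Laplace's method (a Gaussian integral over the peak, with the $p$ and the curvature $C^2$ appearing in the width) yields a factor proportional to $(Cp^{1/2}n^{3/2})^{-1/2}$ inside, and collecting everything, taking the $p$-th root produces the stated $\bigl(2\pi^{1/2}/(Cp^{1/2}n^{3/2})\bigr)^{1/p}$ prefactor times $e^{Kn}BC(n/4\pi)^{1/2}$; the factor of $2$ accounts for the two symmetric peaks at $\theta_n$ and $2\pi-\theta_n$. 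The $p=\infty$ case is the limit $p\to\infty$ (equivalently the peak value $|P_n(\theta_n)|$ itself), for which the prefactor tends to $1$, consistent with $\norm{P_n}_\infty\sim e^{Kn}BC(n/4\pi)^{1/2}$.

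The main obstacle is making the Laplace-method heuristic rigorous uniformly in the regime near the peak while controlling the tails. Two difficulties compound: first, the integrand involves $\log|\sin|$, which has logarithmic singularities at each $\theta=2\pi j/k$, so the sum is not smooth and one must show these singularities (away from the peak) contribute negligibly to the $L^p$ integral and do not corrupt the second-order expansion near $\theta_n$; the singularity structure becomes denser as $n$ grows, so a careful Euler--Maclaurin bound with explicit error control (as in Sudler's and Wright's work) is needed rather than naive Taylor expansion. Second, one must verify that the error in replacing $\sum\log|2\sin(k\theta/2)|$ by $nG(\theta)$ plus the computed correction is $o(1)$ \emph{uniformly} on the Gaussian window of width $(pn)^{-1/2}$ around $\theta_n$, and that the contribution from $\theta$ outside this window is exponentially smaller — the latter requiring that $G(\theta)<K$ strictly away from $\theta_n$ with a quantitative gap. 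Establishing this uniform second-order control, together with the precise evaluation of the curvature constant $C^2=-\tfrac12\,\tfrac{\pi}{w_0}\cot(\pi w_0)$, is the technical heart of the argument; once it is in place, the Gaussian integration and the extraction of the $p$-dependence are routine.
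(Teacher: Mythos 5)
Your proposal follows essentially the same route as the paper's own proof sketch: a Laplace-method analysis around the peak at $\theta\approx 2\pi w_0/n$, with the local approximation supplied by Euler--Maclaurin (as in Wright and Sudler), the tails controlled by Sudler's estimate, the factor of $2$ coming from the symmetry $\theta\mapsto 2\pi-\theta$, and the Gaussian width governed by the curvature $-2C^2=\pi\cot(\pi w_0)/w_0$ of $w^{-1}\int_0^w\log\sin(\pi t)\,dt$ at $w_0$ --- a computation you carry out correctly and which the paper leaves implicit. The technical obstacles you flag (uniform error control on the Gaussian window and a quantitative gap off the peak) are exactly the parts the paper also defers to Wright and Sudler, so the two arguments match in both structure and level of rigor.
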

\begin{proof}[Proof sketch]
Take $1 \leq p < \infty$.
Let $\Pi_n(\theta)=\prod_{k=1}^n 2 |\sin(\pi k \theta)|$. 
Let
$J=[\theta_0-\gamma,\theta_0+\gamma]$, where $\theta_0=\frac{w_0}{n}$ and $\gamma=n^{-4/3}$.
(In fact, the proof works more smoothly if one  chooses $\gamma$ so that  the exponent of $n$ is strictly between $-\frac{3}{2}$ and $-\frac{4}{3}$, 
say their arithmetic mean $-\frac{17}{12}$.)
Using Sudler's work
\cite{MR0163890}, Wright shows that if $\theta \in [0,\frac{1}{2}] \setminus J$ then $\Pi_n(\theta)=o\Big(\frac{e^{Kn}}{n}\Big)$. 
Using the Euler-Maclaurin summation formula, Wright gets an approximation to $\Pi_n(\theta)$ in the interval
$[\frac{1}{2n},\frac{1+w_0}{2n}]$. 
Then using this approximation we can show that
\[
\int_J \Pi_n(\theta)^p d\theta=\frac{\pi^{1/2}}{C p^{1/2} n^{3/2}} \Big( e^{Kn} BC
\Big( \frac{n}{4\pi}\Big)^{1/2}\Big)^p\Big(1+o(1)\Big),
\]
where $C^2=-\frac{1}{2} \frac{\pi}{w_0} \cot(\pi w_0)$, $C>0$. One can compute that $C=1.606193491\ldots$. It follows that
\begin{eqnarray*}
\norm{P_n}_p&=&\Big( 2 \int_0^{1/2} \Pi_n(\theta)^p d\theta \Big)^{1/p}\\
&=&\Big( 2 \int_J \Pi_n(\theta)^p + 2\int_{[0,\frac{1}{2}]\setminus J} \Pi_n(\theta)^p d\theta \Big)^{1/p}\\
&=&\Bigg( 2 \frac{\pi^{1/2}}{C p^{1/2} n^{3/2}} \Big( e^{Kn} BC
\Big( \frac{n}{4\pi}\Big)^{1/2}\Big)^p\Big(1+o(1)\Big) + o(e^{pKn}n^{-p}) \Bigg)^{1/p}\\
&=&\Big( 2 \frac{\pi^{1/2}}{C p^{1/2} n^{3/2}} \Big( e^{Kn} BC
\Big( \frac{n}{4\pi}\Big)^{1/2}\Big)^p+o(e^{pKn} n^{\frac{p-3}{2}})+ o(e^{pKn}n^{-p}) \Big)^{1/p}\\
&=&\Big( 2 \frac{\pi^{1/2}}{C p^{1/2} n^{3/2}} \Big( e^{Kn} BC
\Big( \frac{n}{4\pi}\Big)^{1/2}\Big)^p+o(e^{pKn} n^{\frac{p-3}{2}})\Big)^{1/p}\\
&=&\Big(2 \frac{\pi^{1/2}}{C p^{1/2} n^{3/2}}\Big)^{1/p} e^{Kn} BC\Big( \frac{n}{4\pi}\Big)^{1/2}(1+o(1))^{1/p}\\
&=&\Big(2 \frac{\pi^{1/2}}{C p^{1/2} n^{3/2}}\Big)^{1/p} e^{Kn} BC\Big( \frac{n}{4\pi}\Big)^{1/2}(1+o(1)).
\end{eqnarray*}
In summary we have 
\[
\norm{P_n}_p \sim \Big(2 \frac{\pi^{1/2}}{C p^{1/2} n^{3/2}}\Big)^{1/p} e^{Kn} BC
\Big( \frac{n}{4\pi}\Big)^{1/2}.
\]

From the approximation Wright gets for $\Pi_n(\theta)$, we obtain
\[
\Pi_n(\theta_0)=e^{Kn} BC\Big( \frac{n}{4\pi}\Big)^{1/2}\cdot (1+o(1)),
\]
and then that $\norm{P_n}_\infty \sim e^{Kn} BC\Big( \frac{n}{4\pi}\Big)^{1/2}$.
\end{proof}

We have only sketched the proof of Theorem \ref{Lpestimate}, and to make this estimate plausible to
a reader who doesn't want to read Wright \cite{MR0163891} and Sudler \cite{MR0163890}, we
show in Figure \ref{L1plot} a plot of $\frac{\norm{P_n}_1}{e^{Kn} n^{-1}}$ for $n=1,\ldots,400$ and in Figure
\ref{L2plot} a plot of $\frac{\norm{P_n}_2}{e^{Kn} n^{-1/4}}$ for $n=1,\ldots,400$. We have from Theorem \ref{Lpestimate} that
\[
\norm{P_n}_1 \sim B e^{Kn} n^{-1}=2.740222990\ldots \cdot e^{Kn} n^{-1}
\]
and
\[
\norm{P_n}_2 \sim 2^{-3/4} \pi^{-1/4} BC^{1/2} e^{Kn} n^{-1/4}=1.551046691\ldots \cdot e^{Kn}n^{-1/4}.
\]

\begin{figure}
\includegraphics{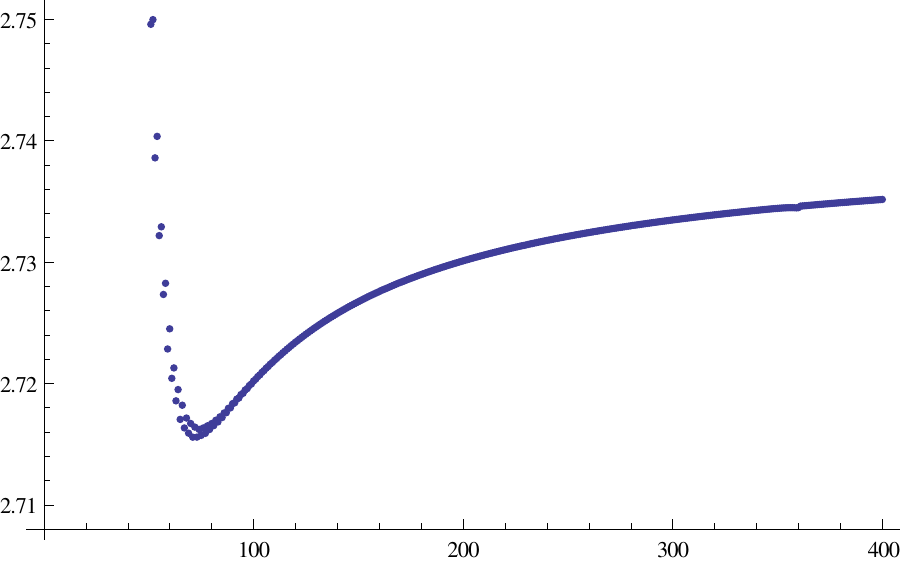}
\caption{$\frac{\norm{P_n}_1}{e^{nK} n^{-1}}$, for $n=1,\ldots,400$}
\label{L1plot}
\end{figure}

\begin{figure}
\includegraphics{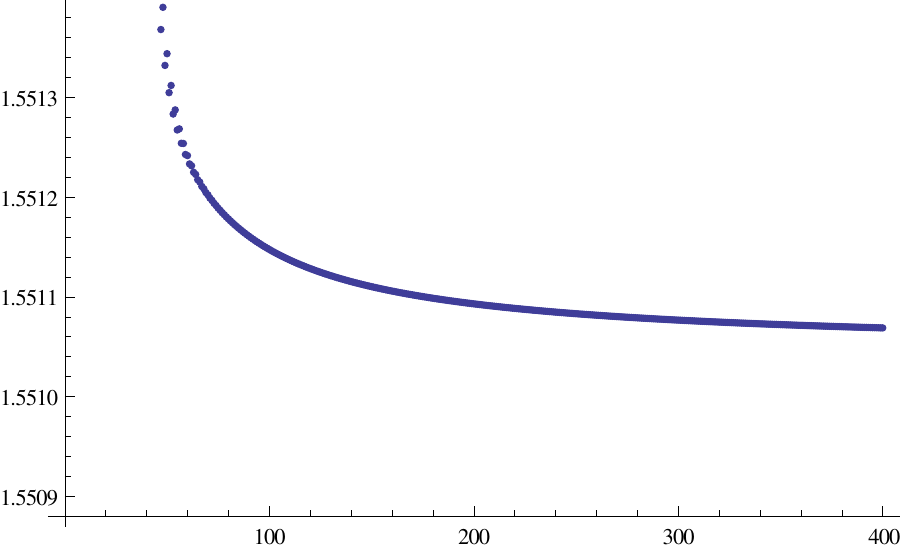}
\caption{$\frac{\norm{P_n}_2}{e^{nK} n^{-1/4}}$, for $n=1,\ldots,400$}
\label{L2plot}
\end{figure}

Using the pentagonal number theorem we can deduce that $\norm{P_n}_1 \to \infty$ as $n \to \infty$
from a general result on exponential sums. Littlewood's conjecture, proved in
\cite{MR621019}, is that
there is a constant $H$ such that
if the first $M$ nonzero Fourier coefficients of an $L^1$ function $f$
each has absolute value $\geq 1$, then
$\norm{f}_1 \geq H \log M$.
The case of the Dirichlet kernel shows us that $H \leq \frac{4}{\pi^2}$,
since $\norm{D_n}_1=\frac{4}{\pi^2}\log n +O(1)$.
Of course all the nonzero Fourier coefficients of $P_n$ have absolute value $\geq 1$, and one can
show using the pentagonal number theorem  that $P_n$ has $\geq \frac{3}{2} \sqrt{n}$ nonzero Fourier coefficients  with absolute value $\geq 1$, hence
\[
\norm{P_n}_1 \geq H \log\Big(\frac{3}{2} \sqrt{n} \Big).
\]

The $L^\infty$ norm of $\prod_{k=1}^n \sin(k\theta)$ is discussed by Carley and Li \cite{MR1790921}. They observe that the maximum of 
$\prod_{k=1}^n \sin(k\theta)$
occurs
around $\theta=\frac{3\pi}{4n}$. Using the Euler-Maclaurin summation formula, they 
show that
\[
\prod_{k=1}^n \sin\Big(\frac{3\pi k}{4n} \Big) \geq 
C \sqrt{n}\exp\Big( -\frac{5}{6}n\log 2 \Big),
\]
for some $C>0$. Thus
\begin{equation}
|P_n\Big(\frac{3\pi}{2n}\Big)| \geq C \sqrt{n} \exp\Big(\frac{1}{6}n \log 2 \Big).
\label{carley}
\end{equation}

We shall improve on the lower bound given in \eqref{carley}. Let $A=\frac{2G}{3\pi}$,
where
\[
G=\sum_{n=0}^\infty \frac{1}{(2n+1)^2} (-1)^n= 0.9159655942\ldots
\]
is Catalan's constant.

\begin{theorem}
For some $C_0>0$, 
\[
\Big|P_n\Big(\frac{3\pi}{2n}\Big)\Big| \leq n^{C_0}e^{An}
\]
and
\[
\Big|P_n\Big(\frac{3\pi}{2n}\Big)\Big| \geq n^{-C_0} e^{An}.
\]
\label{bigpoint}
\end{theorem}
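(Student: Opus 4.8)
The plan is to start from the sine-product representation \eqref{sineproduct}. At $\theta=\frac{3\pi}{2n}$ the arguments $\frac{k\theta}{2}=\frac{3\pi k}{4n}$ satisfy $0<\frac{3\pi k}{4n}\le\frac{3\pi}{4}<\pi$ for $1\le k\le n$, so every factor $\sin\big(\frac{3\pi k}{4n}\big)$ is positive and we may drop the absolute values together with the unimodular prefactor:
\[
\Big|P_n\Big(\frac{3\pi}{2n}\Big)\Big| = 2^n \prod_{k=1}^n \sin\Big(\frac{3\pi k}{4n}\Big).
\]
Taking logarithms reduces the theorem to estimating $S_n=\sum_{k=1}^n\log\sin\big(\frac{3\pi k}{4n}\big)$: the two claimed inequalities are equivalent to $\log\big|P_n(\frac{3\pi}{2n})\big|=An+O(\log n)$, that is, to showing that $n\log 2+S_n$ differs from $An$ by only a logarithmic amount.

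The second step is to identify the leading term as a Riemann sum and evaluate it. With $g(t)=\log\sin\big(\frac{3\pi t}{4}\big)$ we have $S_n=\sum_{k=1}^n g(k/n)$, which approximates $n\int_0^1 g(t)\,dt=\frac{4n}{3\pi}\int_0^{3\pi/4}\log\sin x\,dx$. I would evaluate this integral from the Fourier expansion $\log(2\sin x)=-\sum_{m\ge1}\frac{\cos(2mx)}{m}$, valid on $(0,\pi)$, which gives
\[
\int_0^{3\pi/4}\log(2\sin x)\,dx = -\frac12\sum_{m=1}^\infty \frac{\sin(3m\pi/2)}{m^2} = -\frac12\,\mathrm{Cl}_2\Big(\frac{3\pi}{2}\Big).
\]
Computing $\sin(3m\pi/2)$ (it vanishes for even $m$, and equals $-(-1)^{(m-1)/2}$ for odd $m$) collapses the series to $-G$, so $\int_0^{3\pi/4}\log(2\sin x)\,dx=\frac{G}{2}$ and hence $\int_0^{3\pi/4}\log\sin x\,dx=\frac{G}{2}-\frac{3\pi}{4}\log 2$. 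The leading term is therefore $\frac{4n}{3\pi}\big(\frac{G}{2}-\frac{3\pi}{4}\log2\big)=An-n\log 2$, and the stray $-n\log 2$ cancels exactly against the $n\log 2$ coming from the $2^n$ outside the sum. This cancellation is what singles out the point $\frac{3\pi}{2n}$ and produces Catalan's constant in the exponent $A=\frac{2G}{3\pi}$.

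The third step is the error control, and this is where I expect the main obstacle: $g$ has a logarithmic singularity at $t=0$, so neither a crude Riemann-sum bound nor a naive Euler--Maclaurin expansion with an endpoint at the origin is legitimate. I would isolate the singular part by writing $\sin\big(\frac{3\pi k}{4n}\big)=\frac{3\pi k}{4n}\cdot\frac{\sin(3\pi k/(4n))}{3\pi k/(4n)}$, so that
\[
S_n = \sum_{k=1}^n \log\frac{3\pi k}{4n} + \sum_{k=1}^n \log\frac{\sin(3\pi k/(4n))}{3\pi k/(4n)}.
\]
The first sum equals $n\log\frac{3\pi}{4}-n\log n+\log n!$, which Stirling's formula turns into $n\log\frac{3\pi}{4}-n+\frac12\log n+O(1)$; this is the sole source of the logarithmic term. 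The second sum is a Riemann sum of $t\mapsto\log\frac{\sin(3\pi t/4)}{3\pi t/4}$, which extends analytically to $[0,1]$ (value $0$ at $t=0$), so Euler--Maclaurin evaluates it as $n\int_0^1\log\frac{\sin(3\pi t/4)}{3\pi t/4}\,dt+O(1)$ with no logarithmic contribution.

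Combining the two pieces with the integral computed above gives $S_n=An-n\log 2+\frac12\log n+O(1)$, whence $\log\big|P_n(\frac{3\pi}{2n})\big|=An+\frac12\log n+O(1)$. In particular the difference from $An$ is at most $C_0\log n$ for a suitable $C_0>0$ and all large $n$, and enlarging $C_0$ to absorb the finitely many remaining small $n$ yields both inequalities. The same splitting can be recast as Euler--Maclaurin on $[1,n]$, in which the boundary contribution is $-\frac{4n}{3\pi}\int_0^{3\pi/(4n)}\log\sin x\,dx$, precisely the integral flagged before the statement of the theorem, and it again supplies the logarithmic correction.
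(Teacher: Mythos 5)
Your proof is correct, and while the evaluation of $\int_0^{3\pi/4}\log\sin x\,dx=\tfrac{G}{2}-\tfrac{3\pi}{4}\log 2$ via the Fourier expansion of $\log(2\sin x)$ is the same computation the paper performs, your control of the Riemann-sum error takes a genuinely different route. The paper compares $\sum_{k=1}^n f\big(\tfrac{3\pi k}{4n}\big)\cdot\tfrac{3\pi}{4n}$ with the integral directly, exploiting that $f=\log|\sin|$ is increasing on $[0,\tfrac{\pi}{2}]$ and decreasing on $[\tfrac{\pi}{2},\tfrac{3\pi}{4}]$ so that the per-subinterval errors telescope to $O(\log n/n)$, with the singular first subinterval handled by the elementary bounds $\tfrac{2}{\pi}x\le\sin x\le x$. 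You instead peel the singularity off explicitly, writing $\sin\big(\tfrac{3\pi k}{4n}\big)=\tfrac{3\pi k}{4n}\cdot\tfrac{\sin(3\pi k/(4n))}{3\pi k/(4n)}$, summing the logarithms of the linear factors exactly via Stirling, and applying a standard $O(1)$ Riemann-sum estimate to the remaining function, which is indeed real-analytic and nonvanishing on $[0,1]$ because $\tfrac{3\pi}{4}<\pi$. Both routes are sound. Yours buys more: it yields $\log\big|P_n\big(\tfrac{3\pi}{2n}\big)\big|=An+\tfrac12\log n+O(1)$, i.e.\ $\big|P_n\big(\tfrac{3\pi}{2n}\big)\big|\asymp \sqrt{n}\,e^{An}$, pinning down the power of $n$ that the theorem leaves unspecified and matching the $\sqrt{n}$ appearing in the Carley--Li lower bound \eqref{carley}; the paper's argument is somewhat more elementary (no Stirling, only monotonicity and the two sine inequalities) but only delivers the symmetric $n^{\pm C_0}$ window.
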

\begin{proof}
Let $f(x)=\log|\sin x|$. Let $l=\lfloor \frac{2n}{3} \rfloor$. We have
\[
\begin{split}
&\Big| \sum_{k=1}^n f\Big(\frac{3\pi k}{4n}\Big) \cdot \frac{3\pi}{4n} - 
\int_0^{\frac{3\pi}{4}} f(x) dx \Big|\\
\leq&\sum_{k=1}^n \Big| f\Big(\frac{3\pi k}{4n}\Big)\cdot \frac{3\pi}{4n}
-\int_{(k-1)\frac{3\pi}{4n}}^{k \frac{3\pi}{4n}} f(x)dx \Big|\\
\leq& f\Big(\frac{3\pi}{4n}\Big) \frac{3\pi}{4n} - \int_0^{\frac{3\pi}{4n}} f(x)dx \\
&+\sum_{k=2}^l \Big( f\Big(k\frac{3\pi}{4n}\Big)-f\Big((k-1)\frac{3\pi}{4n}\Big)\Big) \frac{3\pi}{4n}\\
&+\Big(f(\frac{\pi}{2})-f\Big(l\frac{3\pi}{4n} \Big)\Big)\frac{3\pi}{4n}+\Big(f(\frac{\pi}{2})-f\Big((l+1)\frac{3\pi}{4n} \Big)\Big)\frac{3\pi}{4n}\\
&+\sum_{k=l+2}^n \Big( f\Big((k-1)\frac{3\pi}{4n}\Big)-f\Big(k\frac{3\pi}{4n}\Big)\Big) \frac{3\pi}{4n}.
\end{split}
\]
We will estimate these lines separately. For the first line, because $\sin x \leq x$ for all $x \geq 0$ and
because $\sin x \geq \frac{2}{\pi} x$ for
$x \in [0,\frac{\pi}{2}]$,
\begin{eqnarray*}
\frac{3\pi}{4n} \log \sin \frac{3\pi}{4n}  - \int_0^{\frac{3\pi}{4n}} \log \sin xdx
&\leq& \frac{3\pi}{4n} \log \frac{3\pi}{4n}  - \int_0^{\frac{3\pi}{4n}} \log \frac{2}{\pi}x dx\\
&=&\frac{3\pi}{4n} \log \frac{3\pi}{4n}-\frac{3\pi}{4n} \log \frac{2}{\pi}
-\frac{3\pi}{4n} \log \frac{3\pi}{4n} + \frac{3\pi}{4n}\\
&=&\frac{3\pi}{4n}\Big(1-\log \frac{2}{\pi}\Big)\\
&=&O\Big(\frac{1}{n}\Big).
\end{eqnarray*}

For the second line, because $f'(x)=\cot x$ we have
\begin{eqnarray*}
\sum_{k=2}^l \Big( f\Big(k\frac{3\pi}{4n}\Big)-f\Big((k-1)\frac{3\pi}{4n}\Big)\Big) \frac{3\pi}{4n}&=&
\Big( f\Big(l\frac{3\pi}{4n}\Big)-f\Big(\frac{3\pi}{4n}\Big)\Big) \frac{3\pi}{4n} \\
 &=&\frac{3\pi}{4n} \int_{\frac{3\pi}{4n}}^{l\frac{3\pi}{4n}} \cot x dx\\
 &\leq& \frac{3\pi}{4n} \int_{\frac{3\pi}{4n}}^{\frac{\pi}{2}} \cot x dx\\
 &=& \frac{3\pi}{4n} \Big( f\Big(\frac{\pi}{2} \Big) - f\Big(\frac{3\pi}{4n}\Big)\Big)\\
 &=&-\frac{3\pi}{4n} \log \sin  \frac{3\pi}{4n}\\
 &=&O\Big(\frac{\log n}{n} \Big).
\end{eqnarray*}

For the third line,  $|f\Big((l+1)\frac{3\pi}{4n} \Big)| \leq |f\Big(l\frac{3\pi}{4n} \Big)|$. Moreover,
$\lfloor \frac{2n}{3} \rfloor \geq \frac{n}{2}$ for $n \geq 2$, so $|f\Big(l\frac{3\pi}{4n} \Big)| \leq |\log \sin \frac{3\pi}{8}|$. Therefore the third line is
$O(\frac{1}{n})$.

For the fourth line,  because $f'(x)=\cot x$ we have
\begin{eqnarray*}
\sum_{k=l+2}^n \Big( f\Big((k-1)\frac{3\pi}{4n}\Big)-f\Big(k\frac{3\pi}{4n}\Big)\Big) \frac{3\pi}{4n}&=&
-\frac{3\pi}{4n}\Big(f\Big(\frac{3\pi}{4}\Big)-  f\Big((l+1)\frac{3\pi}{4n}\Big) \Big)\\
&=&-\frac{3\pi}{4n} \int_{(l+1)\frac{3\pi}{4n}}^{\frac{3\pi}{4}} \cot x dx\\
&\leq&-\frac{3\pi}{4n} \int_{\frac{\pi}{2}}^{\frac{3\pi}{4}} \cot x dx\\
&=&O\Big(\frac{1}{n} \Big).
\end{eqnarray*}

The sum of the four lines is $O\Big( \frac{\log n}{n}\Big)$, and
thus there is some $C_0>0$ such that 
\[
\Big| \sum_{k=1}^n \log\sin\Big(\frac{3\pi k}{4n}\Big) - 
\frac{4n}{3\pi} \int_0^{\frac{3\pi}{4}} \log\sin x dx \Big| \leq C_0\log n.
\]

One can check that $\log|\sin x|$ has the Fourier series
\[
-\log 2 - \sum_{n=1}^\infty \frac{1}{n} (1+(-1)^n) \cos(nx).
\]
If $f \in L^1(\mathbb{T})$ has the Fourier series $\sum a_k e^{ikx}$, then
$\int_a^b f(x) dx = \sum a_k \int_a^b e^{ikx} dx$ \cite[\S 13.5]{titchmarsh}.  
For $f(x)=\log |\sin x|$, $a=0$, and $b=\frac{3\pi}{4}$, we have
\begin{eqnarray*}
\int_0^{\frac{3\pi}{4}} \log \sin x dx&=&-\frac{3\pi \log 2}{4} -\sum_{n=1}^\infty \frac{1}{n^2}(1+(-1)^n)
\sin \frac{3\pi n}{4}\\
&=&-\frac{3\pi \log 2}{4} -\sum_{n=1}^\infty \frac{1}{(2n)^2} \cdot 2 \sin \frac{3\pi \cdot 2n}{4}\\
&=&-\frac{3\pi \log 2}{4} - \frac{1}{2}\sum_{n=1}^\infty \frac{1}{n^2} \sin \frac{3\pi n}{2}\\
&=&-\frac{3\pi \log 2}{4} - \frac{1}{2}\sum_{n=1}^\infty \frac{1}{(2n+1)^2} \sin \frac{3\pi \cdot (2n+1)}{2}\\
&=&-\frac{3\pi \log 2}{4} - \frac{1}{2}\sum_{n=1}^\infty \frac{1}{(2n+1)^2} (-1)^{n+1}\\
&=&-\frac{3\pi \log 2}{4} + \frac{G}{2}.
\end{eqnarray*}
Therefore
\[
\Big| \sum_{k=1}^n \log\sin\Big(\frac{3\pi k}{4n}\Big) - 
(A-\log 2)n \Big| \leq C_0\log n
\]
for $A=\frac{2G}{3\pi}$.
Taking exponentials, it follows that
\[
\prod_{k=1}^n \sin\Big(\frac{3\pi k}{4n}\Big) \leq n^{C_0} e^{(A-\log 2)n}
\]
and
\[
\prod_{k=1}^n \sin\Big(\frac{3\pi k}{4n}\Big) \geq n^{-C_0} e^{(A-\log 2)n}.
\]
Thus by \eqref{sineproduct} we get
$|P_n\Big(\frac{3\pi}{2n}\Big)| \leq n^{C_0}e^{An}$
and $|P_n\Big(\frac{3\pi}{2n}\Big)| \geq n^{-C_0}e^{An}$.
\end{proof}

This shows that $\norm{P_n}_\infty \geq n^{-C_0} e^{An}$.
One can compute that $e^A=1.214550362\ldots$.

In the following theorem, we use the fact that $|P_n(\theta)|$ is large at $\theta=\frac{3\pi}{2n}$ and is $0$ at $\theta=0$
to get a lower bound on the $L^2$ norm of $P_n$. It is worse than the asymptotic formula that
we get from Theorem \ref{Lpestimate}, but its proof doesn't use the results of Wright \cite{MR0163891} and Sudler \cite{MR0163890}.

\begin{theorem} 
We have
\[
\norm{P_n}_2 \geq \frac{n^{-C_0} e^{An}}{\sqrt{2.2n(n+1)}}.
\]
\label{bigsmall}
\end{theorem}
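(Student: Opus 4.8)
The plan is to bound $\norm{P_n}_2$ from below by combining Parseval's identity with a single Cauchy--Schwarz estimate in the Fourier domain, using the vanishing $P_n(0)=0$ to subtract a term that converts the weight $e^{ik\theta}$ into the smaller $e^{ik\theta}-1$. Write $P_n(\theta)=\sum_{k=0}^N c_k e^{ik\theta}$, where $N=\frac{n(n+1)}{2}$ and $c_k=\widehat{P_n}(k)$, so that Parseval's identity (with the normalization of the excerpt) gives $\norm{P_n}_2^2=\sum_{k=0}^N |c_k|^2$. Put $\theta_1=\frac{3\pi}{2n}$, the point at which Theorem \ref{bigpoint} controls $|P_n|$ from below.

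First I would use the vanishing at the origin to write $P_n(\theta_1)=P_n(\theta_1)-P_n(0)=\sum_{k=0}^N c_k(e^{ik\theta_1}-1)$, and then apply the Cauchy--Schwarz inequality:
\[
|P_n(\theta_1)|^2 \leq \Big(\sum_{k=0}^N |c_k|^2\Big)\Big(\sum_{k=0}^N |e^{ik\theta_1}-1|^2\Big)=\norm{P_n}_2^2 \sum_{k=0}^N |e^{ik\theta_1}-1|^2.
\]
The point of subtracting $P_n(0)=0$ is that it kills the $k=0$ summand and replaces the unit weights $|e^{ik\theta_1}|=1$ by the smaller weights $|e^{ik\theta_1}-1|$, which is exactly how the hypotheses ``large at $\theta_1$, zero at $0$'' both enter.

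Next I would estimate the trigonometric sum using $|e^{ik\theta_1}-1|^2=4\sin^2\big(\tfrac{3\pi k}{4n}\big)\leq 4$ together with the fact that the $k=0$ term vanishes, so that
\[
\sum_{k=0}^N |e^{ik\theta_1}-1|^2 = \sum_{k=1}^N 4\sin^2\Big(\tfrac{3\pi k}{4n}\Big) \leq 4N = 2n(n+1) \leq 2.2\,n(n+1).
\]
Rearranging and invoking the lower bound $|P_n(\theta_1)|\geq n^{-C_0}e^{An}$ from Theorem \ref{bigpoint} then yields $\norm{P_n}_2 \geq \frac{|P_n(\theta_1)|}{\sqrt{2.2\,n(n+1)}} \geq \frac{n^{-C_0}e^{An}}{\sqrt{2.2\,n(n+1)}}$, which is the claim.

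The proof is short, and the only genuine choice is the one move of subtracting $P_n(0)$ before applying Cauchy--Schwarz. I do not expect a serious obstacle; the main thing to watch is that the bound on the trigonometric sum hold uniformly in $n$, and the crude estimate $\sin^2\leq 1$ already gives the constant $2$, so the stated $2.2$ merely leaves slack. A sharper evaluation, namely $\sum_{k=0}^N \sin^2\big(\tfrac{3\pi k}{4n}\big)\sim \tfrac{n(n+1)}{4}$ obtained by summing the resulting geometric cosine series, would drive the constant down toward $1$, but nothing beyond the trivial bound is needed for the claimed inequality.
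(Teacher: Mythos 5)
Your proof is correct, and it takes a genuinely different route from the paper. The paper obtains the inequality $|P_n(0)|^2+|P_n(\tfrac{3\pi}{2n})|^2 \leq 2.2\,n(n+1)\norm{P_n}_2^2$ by citing the Davenport--Halberstam large sieve inequality with $R=2$ sample points $x_1=0$, $x_2=\frac{3}{4n}$ (so $\delta=\frac{3}{4n}$ and $\max(\delta^{-1},2N)=n(n+1)$), and then feeds in $P_n(0)=0$ and the lower bound from Theorem \ref{bigpoint}. You replace the appeal to the large sieve with a self-contained two-line argument: subtract $P_n(0)=0$ to write $P_n(\theta_1)=\sum_k c_k(e^{ik\theta_1}-1)$, apply Cauchy--Schwarz, and bound $\sum_{k=1}^N|e^{ik\theta_1}-1|^2\leq 4N=2n(n+1)$. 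Both arguments exploit exactly the same two inputs (largeness at $\frac{3\pi}{2n}$, vanishing at $0$), but yours is more elementary and in fact slightly sharper, giving the constant $2$ in place of $2.2$ (and, as you note, a constant tending to $1$ if one evaluates the sine sum via the geometric cosine series). The large sieve would earn its keep only if one wanted to sum $|P_n|^2$ over many well-separated points; for two points it is overkill, and your direct estimate dominates it. The only cosmetic caveat is that with the constant $2$ you prove a marginally stronger statement than the one asserted, which you correctly relax back to $2.2$ to match the theorem.
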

\begin{proof}
Davenport and Halberstam \cite{MR0197427} prove the following. 
Let $a_{-N},\ldots,a_N$ be complex numbers, and define
\[
S(x)=\sum_{k=-N}^N a_k e^{2\pi ikx}.
\]
For $R \geq 2$, let $x_1,\ldots,x_R$ be real numbers and put
$\delta=\min_{j \neq k} \norm{x_j-x_k}$, where $\norm{\theta}$
is the distance from $\theta$ to the nearest integer, e.g. $\norm{\frac{1}{10}}=\frac{1}{10}$ and $\norm{-\frac{7}{10}}=\frac{3}{10}$. 
We have that
\[
\sum_{r=1}^R |S(x_r)|^2 \leq 2.2 \max(\delta^{-1},2N) \sum_{k=-N}^N |a_k|^2.
\] 

Take $N=\frac{n(n+1)}{2}$, and $a_k=\widehat{P_n}(k)$. This gives $S(\frac{x}{2\pi})=P_n(x)$. 
Let $R=2$, $x_1=0$ and $x_2=\frac{3}{4n}$. Therefore $\delta=\frac{3}{4n}$ 
and so $\max(\delta^{-1},2N)=\max(\frac{4n}{3},n(n+1))=n(n+1)$. 
Then from Davenport and Halberstam's result we have that
\[
|S(0)|^2+|S\Big(\frac{3}{4n}\Big)|^2 \leq 2.2n(n+1) \sum_{k=-N}^N |a_k|^2=
2.2n(n+1) \norm{\widehat{P_n}}_2^2.
\]
Of course $S(0)=0$. By Parseval's theorem, $\norm{\widehat{P_n}}_2=\norm{P_n}_2$. So
\[
|P_n\Big(\frac{3\pi}{2n}\Big)|^2 \leq 2.2n(n+1) \norm{P_n}_2^2.
\]

We  proved in Theorem \ref{bigpoint} that
$|P_n\Big(\frac{3\pi}{2n}\Big) | \geq n^{-C_0} e^{An}$.
This gives us
\[
\frac{n^{-C_0} e^{An}}{\sqrt{2.2n(n+1)}} \leq \norm{P_n}_2.
\]\end{proof}

Lubinsky  \cite[Theorem 1.1]{MR1684685} proves that if $\epsilon>0$, then for almost all $\theta$ we have
\[
|\log |P_n(\theta)|| = O((\log n)(\log \log n)^{1+\epsilon}),
\]
but that this is false if $\epsilon=0$.
If $\theta$ has bounded partial quotients, Lubinsky shows that $\log |P_n(\theta)| = O(\log n)$ \cite[Theorem 1.3]{MR1684685}.
However, almost all $\theta$ do not
have a continued fraction expansion with bounded partial quotients \cite[p. 166, Theorem 196]{MR2445243}.

\section{$Q_n$}
\label{Qnsection}
One can  see that the Fourier coefficient $\widehat{Q_n}(j)$ is equal to the number of ways to write $j$ as a sum of distinct positive integers each $\leq n$. 
For example, the partitions of $9$ into distinct parts each $\leq 6$ are: $1+2+6,1+3+5,2+3+4,2+7,3+6,4+5$, and thus
$\widehat{Q}_7(9)=6$.

Various results have been proved about the number of partitions of $j$ as a sum of integers each $\geq n$ and the number of partitions of $j$ as a sum of distinct integers each $\geq n$ for $n$ small
relative to $j$, e.g. Szekeres \cite{MR0057279}, Freiman and Pitman
\cite{MR1297011}, and Mosaki \cite{MR2477513}.

By \eqref{cosineproduct},
we can express $Q_n(\theta)$ using $\prod_{k=1}^n \cos\Big(\frac{k\theta}{2} \Big)$.
The product  $\prod_{k=1}^n \cos (k\theta)$ has
the following probabilistic interpretation. Let $X_k$ be independent Bernoulli $\pm 1$ random variables. One can check that the characteristic function of $\sum_{k=1}^n kX_k$ is $\prod_{k=1}^n \cos (k\theta)$. Unfortunately, to use the central limit theorem we would first have to normalize the sum by dividing it by $n^{3/2}$, and the characteristic function of $\sum_{k=1}^n \frac{k}{n^{3/2}} X_k$ is $\prod_{k=1}^n \cos \frac{k\theta}{n^{3/2}}$, not $\prod_{k=1}^n \cos (k\theta)$.

We see from \eqref{cosineproduct} that $|Q_n(\theta)|=\prod_{k=1}^n 2|\cos\Big(\frac{k\theta}{2} \Big)|$. In Figure \ref{cosinefigure} we plot $\prod_{k=1}^{10} 2|\cos (k\theta)|$ for $0 \leq \theta \leq \frac{\pi}{2}$; in this plot the ordinate of $0$ is $1024$.

\begin{figure}
\includegraphics{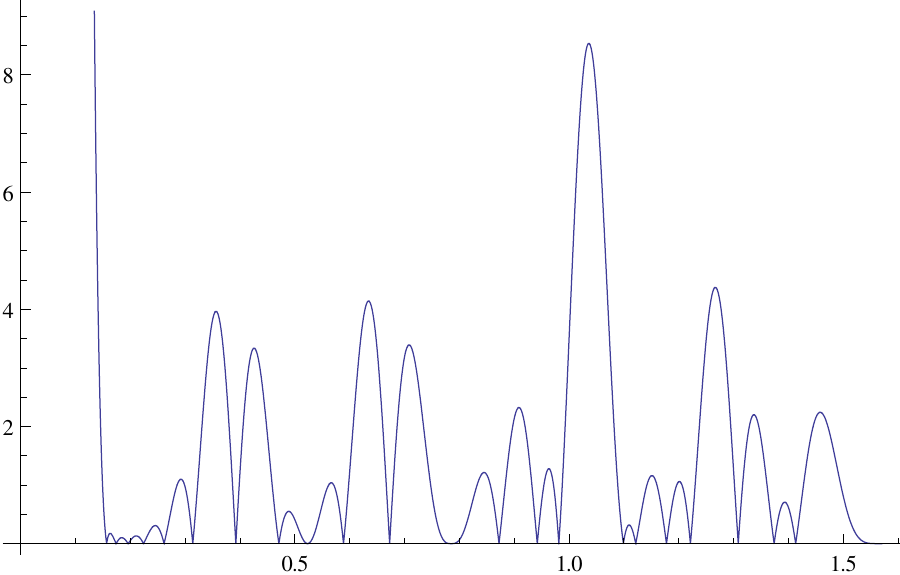}
\caption{$\prod_{k=1}^{10} 2|\cos (k\theta)|$ for $0 \leq \theta \leq \frac{\pi}{2}$}
\label{cosinefigure}
\end{figure}

Of course $Q_n(0)=2^n$, so
$\norm{Q_n}_\infty=2^n$. Aside from $\theta=0$ we can  explicitly evaluate $Q_n(\theta)$ for certain other $\theta$. For any $h$ with $\gcd(n+1,h)=1$, we have $z^{n+1}-1=\prod_{k=1}^{n+1} (z-e^{\frac{2\pi ihk}{n+1}})$.
Since $z^{n+1}-1=(z-1)(z^n+\cdots+z+1)$, we get
$z^n+\cdots+z+1=\prod_{k=1}^n (z-e^{\frac{2\pi ihk}{n+1}})$, and setting $z=-1$ yields 
\[
 Q_n\Big(\frac{2\pi h}{n+1}\Big)=\frac{1+(-1)^n}{2}
\]
for each $h$
with $\gcd(n+1,h)=1$.

For all
$1 \leq p \leq \infty$ we have $\norm{Q_n}_p \leq \norm{Q_n}_\infty =2^n$. On the other hand,
let $1 \leq p \leq q \leq \infty$. One can show that there is some $C>0$ such that
if $f$ satisfies $\hat{f}(j)=0$ for $|j|>N$ then $\norm{f}_q \leq CN^{\frac{1}{p}-\frac{1}{q}} \norm{f}_p$ \cite[p. 123, Exercise 1.8]{katznelson}. (In fact one can take $C=5$.)
Since $\norm{Q_n}_\infty=2^n$, we get for $1 \leq p \leq \infty$ that
$\norm{Q_n}_p \geq \frac{1}{C} 2^n N^{-1/p}$.

We can do better than this. Following Wright's method in the proof
of Theorem \ref{wright}, which we used in our sketch of the proof of Theorem \ref{Lpestimate}, we
get in the following theorem an asymptotic formula for $\norm{Q_n}_p$.

\begin{theorem}
For $1 \leq p < \infty$ we have
\[
\norm{Q_n}_p \sim \Big(\frac{6}{p\pi} \Big)^{\frac{1}{2p}} 2^n n^{-\frac{3}{2p}} .
\]
\label{Qnestimate}
\end{theorem}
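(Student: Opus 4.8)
The plan is to follow the same strategy used in the sketch of Theorem \ref{Lpestimate}: localize the $L^p$ integral to a shrinking neighborhood of the point where $|Q_n|$ attains its maximum, replace $|Q_n|^p$ there by a Gaussian, and show that the rest of the integral is negligible. By \eqref{cosineproduct} we have $|Q_n(\theta)| = \prod_{k=1}^n 2|\cos(k\theta/2)|$, and since each factor is at most $2$ with simultaneous equality only when $k\theta/2 \in \pi\mathbb{Z}$ for every $k$, the maximum $\norm{Q_n}_\infty = 2^n$ is attained at the single point $\theta = 0$ (the function being symmetric about $\theta = \pi$, since $|\cos((2\pi-\theta)k/2)| = |\cos(k\theta/2)|$). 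Thus, in contrast with $P_n$, whose peak sits at $w_0/n$, here the peak is at the origin.

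First I would fix $\gamma = n^{-4/3}$ (any exponent strictly between $-\frac32$ and $-\frac54$ works) and set $J = [-\gamma,\gamma]$. On $J$ I would expand, using $\log\cos x = -\frac{x^2}{2} + O(x^4)$,
\[
\sum_{k=1}^n \log\cos\Big(\frac{k\theta}{2}\Big) = -\frac{\theta^2}{8}\sum_{k=1}^n k^2 + O\Big(\theta^4\sum_{k=1}^n k^4\Big).
\]
Since $\sum_{k=1}^n k^2 = \frac{n^3}{3} + O(n^2)$ and on $J$ one has $\theta^2 n^2 = O(n^{-2/3})$ and $\theta^4 n^5 = O(n^{-1/3})$, both error contributions are $o(1)$ uniformly on $J$ (the Euler-Maclaurin refinement Wright uses for $P_n$ is available if one wants these made fully explicit), so that
\[
|Q_n(\theta)|^p = 2^{np}\exp\Big(-\frac{pn^3\theta^2}{24}\Big)\big(1 + o(1)\big)
\]
uniformly for $\theta \in J$. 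Integrating and extending the Gaussian to all of $\mathbb{R}$ (the tail beyond $\gamma$ is $O(\exp(-cpn^{1/3}))$, hence negligible) gives
\[
\int_J |Q_n(\theta)|^p\,d\theta \sim 2^{np}\int_{-\infty}^\infty \exp\Big(-\frac{pn^3\theta^2}{24}\Big)d\theta = 2^{np}\sqrt{\frac{24\pi}{p}}\,n^{-3/2}.
\]

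It then remains to show $\int_{\mathbb{T}\setminus J}|Q_n(\theta)|^p\,d\theta = o(2^{np}n^{-3/2})$, and this is where the real work lies. On the range $\gamma \le |\theta| \le \varepsilon/n$ (with $\varepsilon < \pi$ fixed) the bound $\log\cos x \le -\frac{x^2}{2}$ still gives $|Q_n(\theta)| \le 2^n\exp(-\frac{1}{24}n^3\theta^2)$, which at $\theta = \gamma$ is $2^n$ times a quantity smaller than any power of $n$; hence this part integrates to $o(2^{np}n^{-3/2})$. The main obstacle is the complementary range $\varepsilon/n \le |\theta| \le \pi$, where I must establish a uniform bound $|Q_n(\theta)| \le 2^n\rho^n$ for some $\rho < 1$. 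Pointwise this is plausible because $\frac{1}{n}\sum_{k=1}^n\log|\cos(k\theta/2)| \to \frac1\pi\int_0^\pi \log|\cos x|\,dx = -\log 2$ for $\theta/2\pi$ irrational, so $|Q_n(\theta)|$ is exponentially smaller than $2^n$ off the peak; but making this uniform, and in particular controlling the spikes of $\prod 2|\cos(k\theta/2)|$ near rationals $\theta = 2\pi p/q$ of small denominator, is precisely the delicate estimate Wright extracts from Sudler's work \cite{MR0163890} in the sine case. I would adapt those estimates (equivalently, discrepancy bounds for $\{k\theta/2 \bmod \pi\}$ together with the continued-fraction expansion of $\theta/2\pi$) to the cosine product to get the required uniform decay. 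Finally, assembling the pieces, the contribution of $\mathbb{T}\setminus J$ being negligible gives
\[
\norm{Q_n}_p^p = \frac{1}{2\pi}\int_0^{2\pi}|Q_n(\theta)|^p\,d\theta \sim \frac{1}{2\pi}\,2^{np}\sqrt{\frac{24\pi}{p}}\,n^{-3/2} = 2^{np}\sqrt{\frac{6}{\pi p}}\,n^{-3/2},
\]
and taking $p$-th roots yields $\norm{Q_n}_p \sim \big(\frac{6}{p\pi}\big)^{1/(2p)} 2^n n^{-3/(2p)}$, as claimed. The one genuinely hard step is the uniform off-peak bound; everything else is the Laplace-type Gaussian computation and the bookkeeping of error terms, exactly as in the sketch of Theorem \ref{Lpestimate}.
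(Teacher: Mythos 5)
Your peak analysis is correct and matches the paper: the maximum of $|Q_n|$ sits at $\theta=0$, the Gaussian approximation $|Q_n(\theta)|^p=2^{np}\exp(-pn^3\theta^2/24)(1+o(1))$ on $|\theta|\le n^{-4/3}$ is right (the paper makes the $o(1)$ explicit via Euler--Maclaurin, getting $O(n^{-1/3})$), and your constant bookkeeping lands on the stated asymptotic. But the step you yourself flag as ``the one genuinely hard step'' --- a uniform bound $|Q_n(\theta)|\le 2^n\rho^n$ on $\varepsilon/n\le|\theta|\le\pi$ --- is left unproved, and that is a genuine gap: the pointwise limit $\frac1n\sum_{k\le n}\log|\cos(k\theta/2)|\to-\log 2$ for irrational $\theta/2\pi$ is not uniform (the summand is unbounded, and the rate depends on Diophantine properties of $\theta$), so ``plausible'' does not close it, and the route you propose --- adapting Sudler's continued-fraction/discrepancy analysis to the cosine product --- is a substantial undertaking that you have not carried out.

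The paper avoids all of that with an elementary device you should note, because it shows the cosine case is fundamentally easier than the sine case. From $\log x\le x-1$ and $\cos(2x)=2\cos^2x-1$ one gets $\log|\cos x|\le\frac14(-1+\cos(2x))$, hence (writing $\theta=2\pi t$)
\[
|Q_n(2\pi t)|\le 2^n\exp\Big(\tfrac14\sum_{k=1}^n\big(-1+\cos(2\pi kt)\big)\Big),
\]
and the exponent is computable in closed form via the Dirichlet kernel: $\sum_{k=1}^n(-1+\cos(2\pi kt))=-n-\frac12+\frac{\sin((2n+1)\pi t)}{2\sin(\pi t)}\le -n-\frac12+\frac{1}{4t}$ for $0<t\le\frac12$, which is $\le-\frac{n}{2}$ once $t\ge\frac{1}{2n}$. (The range $\gamma\le t\le\frac{1}{2n}$ is handled, as you do, by $\cos x\le 1-x^2/2$ and a short sum-versus-integral comparison.) This gives the uniform exponential decay off the peak with no input from Wright or Sudler whatsoever --- the point being that $|Q_n|$ has no secondary near-maximal spikes at rationals, unlike $|P_n|$, because a spike would force the average of $\cos(2\pi kt)$ over $k\le n$ to be near $1$, which only happens near $t=0$. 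Replacing your appeal to Sudler-type estimates with this inequality would complete your argument.
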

\begin{proof}
Let $\Psi_n(\theta)=\prod_{k=1}^n 2|\cos (\pi k\theta)|$. We can check that
\[
\norm{Q_n}_p=\Big( 2 \int_0^{1/2} \Psi_n(\theta)^p d\theta \Big)^{1/p}.
\]
Let $\gamma=n^{-4/3}$. We shall estimate $\Psi_n(\theta)$ separately for $0 \leq \theta \leq \gamma$
and for $\gamma \leq \theta \leq \frac{1}{2}$.

Let $0 \leq \theta \leq \gamma$. We define $F(y)$, depending on $\theta$, by $F(y)=\log \cos(\pi \theta y)$. Then
\[
\log \Psi_n(\theta)=n \log 2 + \sum_{k=1}^n \log \cos (\pi k\theta) = n\log 2 + \sum_{k=1}^n F(k).
\]
By the Euler-Maclaurin summation formula \cite[p. 303, Eq. 7.2.4]{dieudonne} we have
\[
\sum_{k=1}^n F(k) = \int_0^n F(y) dy+\underbrace{\frac{1}{2}F(n)+\frac{1}{2}F(1)+\frac{1}{12}F'(n)-\frac{1}{12}F'(1)-\int_0^1 F(y) dy}_{M_n} + R_n
\]
where $|R_n| \leq \frac{2}{(2\pi)^2} \int_1^n |F'''(y)| dy$. First, doing a change of variables,
because $\theta \leq \gamma=n^{-4/3}$ and
because $\log \cos x=-\frac{x^2}{2}+O(x^4)$,
\begin{eqnarray*}
\int_0^n F(y)dy&=&\frac{1}{\theta} \int_0^{n\theta} \log \cos(\pi z)dz\\ 
&=&\frac{1}{\theta} \int_0^{n\theta}\Big( -\frac{\pi^2z^2}{2}+O(z^4) \Big) dz\\
&=&-\frac{\pi^2}{6} n^3 \theta^2+O(n^5 \theta^4)\\
&=&-\frac{\pi^2}{6} n^3 \theta^2+O(n^{-1/3}).
\end{eqnarray*}

Second, using $\theta \leq \gamma=n^{-4/3}$,  $\log \cos x=-\frac{x^2}{2}+O(x^4)$, and $\tan x=O(x)$,
we have
\begin{eqnarray*}
M_n&=&\frac{1}{2}\log \cos(\pi \theta n)+\frac{1}{2}\log \cos(\pi \theta)-\frac{\pi \theta}{12}\tan(\pi \theta n)+\frac{\pi \theta}{12}\tan(\pi \theta)\\
&&-\int_0^1 \log\cos(\pi \theta y) dy\\
&=&O(n^{-2/3})+O(n^{-8/3})+O(n^{-5/3})+O(n^{-8/3})+O(n^{-8/3})\\
&=&O(n^{-2/3}).
\end{eqnarray*}
Third, $F'''(y)=-2\pi^3 \theta^3 \sec^2(\pi \theta y)\tan(\pi \theta y)$, which yields $|R_n|=O(n^{-10/3})$. Putting these three pieces together gives
\[
\log \Psi_n(\theta)=n \log 2-\frac{\pi^2}{6} n^3 \theta^2+O(n^{-1/3})
\] 
and thus
\[
\Psi_n(\theta)=2^n \exp\Big(-\frac{\pi^2}{6} n^3 \theta^2 \Big)\exp(O(n^{-1/3}))=
2^n \exp\Big(-\frac{\pi^2}{6} n^3 \theta^2 \Big)(1+O(n^{-1/3})).
\]

Therefore, making the change of variables $\phi=\sqrt{\frac{p}{6}} \pi n^{3/2} \theta$ and
because $\int_0^V e^{-\phi^2} d\phi \sim \frac{\sqrt{\pi}}{2}-\frac{\exp(-V^2)}{2V}$ as $V \to \infty$ \cite[p. 97, Eq. 10.8.4]{dieudonne},
\begin{eqnarray*}
\int_0^\gamma \Psi_n(\theta)^p d\theta&=&2^{pn} \int_0^\gamma \exp\Big(-p\frac{\pi^2}{6} n^3 \theta^2 \Big) d\theta \cdot (1+O(n^{-1/3}))\\
&=&2^{pn} \sqrt{\frac{6}{p}} \pi^{-1} n^{-3/2} \int_0^{n^{1/6} \sqrt{\frac{p}{6}} \pi} e^{-\phi^2}   d\phi \cdot (1+O(n^{-1/3}))\\
&=&2^{pn} \sqrt{\frac{3}{2p\pi}} n^{-3/2} \cdot  (1+O(n^{-1/3})) (1+O(n^{-1/6}))\\
&=&2^{pn} \sqrt{\frac{3}{2p\pi}} n^{-3/2} \cdot  (1+O(n^{-1/6})).
\end{eqnarray*}

Now we bound $\Psi_n(\theta)$ for $\gamma \leq \theta \leq \frac{1}{2}$. We have,
for $\Psi_n(\theta) \neq 0$,
\[
\Psi_n(\theta)=\exp(\log \Psi_n(\theta))=
2^n \exp\Big( \sum_{k=1}^n \log |\cos (\pi k \theta)| \Big).
\]
Using the inequality $\log x \leq x-1$ for $x>0$ and the identity $\cos(2x)=2\cos^2 x-1$, we get
 for all $x$ with $\cos x \neq 0$ that
\[
\log | \cos x| 
=\frac{1}{2} \log(\cos^2 x)
\leq \frac{1}{2} (\cos^2 x-1)
=\frac{1}{4}(-1+\cos(2x)).
\]
Hence, for $\Psi_n(\theta) \neq 0$,  
\[
\Psi_n(\theta) \leq 2^n \exp \Big(\frac{1}{4}\sum_{k=1}^n (-1+\cos(2\pi k \theta))  \Big);
\]
but of course this inequality is true when $\Psi_n(\theta)=0$, hence the inequality is true for all
$\theta$.
Let
\[
H_n(\theta)=\sum_{k=1}^n (-\log 2+\cos(2\pi k \theta)).
\]

We first deal with the interval  $\gamma \leq \theta \leq \frac{1}{2\pi n}$. For $0 \leq x \leq 1$ one has $\cos x \leq 1-\frac{x^2}{2}$ (using the Taylor series for $\cos x$,
which is an alternating series), so for $\gamma \leq \theta \leq \frac{1}{2\pi n}$ we have
\[
H_n(\theta) \leq \sum_{k=1}^n -\frac{(2\pi k\theta)^2}{2}
=-2\pi^2 \theta^2 \sum_{k=1}^n k^2
=-2\pi^2 \theta^2 \frac{2n^3+3n^2+n}{6}
\leq -\frac{2\pi^2 \theta^2 n^3}{3},
\]
so $H_n(\theta) \leq -\frac{2\pi^2 n^{1/3}}{3}$.

We now deal with the interval $\frac{1}{2\pi n} \leq \theta \leq \frac{1}{2n}$. 
Since $-1+\cos x=-2\sin^2(\frac{x}{2})$, we have
\[
H_n(\theta)=-2 \sum_{k=1}^n \sin^2 (\pi k \theta).
\]
Using that $\sin^2 x$ is nondecreasing for $0 \leq x \leq \frac{\pi}{2}$ we have
\begin{eqnarray*}
\Big|  \sum_{k=1}^n \pi \theta \sin^2(\pi k \theta) - \int_0^{\pi n \theta} \sin^2 x dx\Big|&=&\Big| \sum_{k=1}^n\Big(\pi \theta \sin^2(\pi k \theta) - \int_{(k-1)\pi \theta}^{k\pi \theta} \sin^2 x dx \Big)  \Big|\\
&\leq&\sum_{k=1}^n\Big|\pi \theta \sin^2(\pi k \theta) - \int_{(k-1)\pi \theta}^{k\pi \theta} \sin^2 x dx  \Big|\\
&\leq&\sum_{k=1}^n \Big| \pi \theta \sin^2(\pi k \theta) -\pi \theta \sin^2((k-1)\pi \theta) \Big|\\
&=&\pi \theta \sum_{k=1}^n \Big| \sin(\pi \theta) \sin((2k-1) \pi \theta) \Big|\\
&\leq&\pi \theta \sum_{k=1}^n \pi \theta\\
&\leq&\frac{\pi^2}{4n}.
\end{eqnarray*}
Therefore
\[
 \sum_{k=1}^n \pi \theta \sin^2(\pi k \theta) \geq \int_0^{\pi n \theta} \sin^2 x dx - \frac{\pi^2}{4n}.
\]
But $\int_0^{\pi n\theta} \sin^2 x dx \geq \int_0^{1/2} \sin^2 x dx=\frac{1}{4}(1-\sin(1))$, because $\theta \geq \frac{1}{2\pi n}$, so
\[
 \sum_{k=1}^n \sin^2(\pi k \theta)
 \geq
 \frac{1}{4\pi\theta}(1-\sin(1)) -
\frac{\pi}{4n\theta}
\geq \frac{n}{2\pi}(1-\sin(1))-\frac{\pi^2}{2}.
\]
So for $\frac{1}{2\pi n} \leq \theta  \leq \frac{1}{2n}$ we have
\[
H_n(\theta) \leq -\frac{n}{\pi}(1-\sin(1)) + \pi^2.
\]

Finally we deal with the interval $\frac{1}{2n} \leq \theta \leq \frac{1}{2}$.
Using $\cos x=\frac{e^{ix}+e^{-ix}}{2}$, the formula for a finite geometric series, and 
then $\sin x=\frac{e^{ix}-e^{-ix}}{2i}$, one can check that
\[
H_n(\theta)=-n-\frac{1}{2}+\frac{1}{2} \frac{\sin((2n+1)\pi \theta)}{\sin(\pi \theta)}.
\]
For $0 \leq x \leq \frac{\pi}{2}$ we have $\sin x \geq \frac{2}{\pi}x$, so
for $\frac{1}{2n} \leq \theta \leq \frac{1}{2}$ we have
\[
H_n(\theta) \leq -n-\frac{1}{2}+\frac{1}{4\theta}\leq -\frac{n}{2}-\frac{1}{2}.
\]

Putting together the bounds we have for $\gamma \leq \theta \leq \frac{1}{2\pi n}$, $\frac{1}{2\pi n} \leq \theta \leq \frac{1}{2n}$, and $\frac{1}{2n} \leq \theta \leq \frac{1}{2}$, we get
\[
\Psi_n(\theta)= O\Big(2^n \exp\Big(-\frac{\pi^2 n^{1/3}}{6}\Big)\Big).
\]

In summary, we have shown that
\begin{eqnarray*}
2 \int_0^{1/2} \Psi_n(\theta)^p d\theta&=&2^{pn} \sqrt{\frac{6}{p\pi}} n^{-3/2} \cdot  (1+O(n^{-1/6}))+
O\Big(2^{pn} \exp\Big(-p\frac{\pi^2 n^{1/3}}{6}\Big)\Big)\\
&=&2^{pn} \sqrt{\frac{6}{p\pi}} n^{-3/2} \cdot  (1+O(n^{-1/6})).
\end{eqnarray*}
\end{proof}

In the following theorem we prove that $\norm{Q_n}_1=O\Big(\frac{2^n}{\sqrt{n}}\Big)$. This is better than the trivial upper bound $\norm{Q_n}_1 \leq 2^n$, but is worse than 
Theorem \ref{Qnestimate}, according to which we have $\norm{Q_n}_1 \sim \sqrt{\frac{6}{\pi}} \frac{2^n}{n^{3/2}}$. However, the following theorem has a simpler proof.

\begin{theorem} 
We have
\[
\norm{Q_n}_1 = O\Big(\frac{2^n}{\sqrt{n}}\Big).
\]
\label{hoelder}
\end{theorem}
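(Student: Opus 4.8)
The plan is to bound $\norm{Q_n}_1$ by $\norm{Q_n}_2$ via Hölder's inequality and then to estimate $\norm{Q_n}_2$ by a short integral argument that reuses the elementary bound on $\log|\cos x|$ already established in the proof of Theorem \ref{Qnestimate}. Since $\frac{d\theta}{2\pi}$ is a probability measure on $\mathbb{T}$, Hölder's inequality with exponents $2$ and $2$ gives $\norm{Q_n}_1 \le \norm{Q_n}_2$, so it suffices to show $\norm{Q_n}_2^2 = O(4^n/n)$.

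Next I would write $|Q_n(\theta)|^2 = 4^n\prod_{k=1}^n\cos^2(k\theta/2)$ and apply the inequality $\log|\cos x| \le \frac14(-1+\cos 2x)$ with $x = k\theta/2$, which gives
\[
|Q_n(\theta)|^2 \le 4^n \exp\Big(\tfrac12\sum_{k=1}^n(-1+\cos k\theta)\Big) = 4^n e^{-n/2}\exp\Big(\tfrac12 g_n(\theta)\Big),
\]
where $g_n(\theta) = \sum_{k=1}^n\cos(k\theta)$. Hence $\norm{Q_n}_2^2 \le 4^n e^{-n/2}\cdot\frac1\pi\int_0^\pi e^{g_n(\theta)/2}\,d\theta$, and the whole problem reduces to proving $\int_0^\pi e^{g_n(\theta)/2}\,d\theta = O(e^{n/2}/n)$.

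To estimate this integral I would split $[0,\pi]$ at $\theta = 2/n$. On $[0,2/n]$ I use only the trivial bound $g_n(\theta)\le n$, so this part contributes at most $\frac2n e^{n/2}$. On $[2/n,\pi]$ I use the Dirichlet-kernel identity $\frac12 + g_n(\theta) = \frac{\sin((n+\frac12)\theta)}{2\sin(\theta/2)}$ together with $\sin(\theta/2)\ge\theta/\pi$ to obtain $g_n(\theta)\le\frac{\pi}{2\theta}\le\frac{\pi n}{4}$; since $\pi/8 < \frac12$, this part contributes at most $\pi e^{\pi n/8} = o(e^{n/2}/n)$. Adding the two pieces yields $\int_0^\pi e^{g_n/2}\,d\theta = O(e^{n/2}/n)$, whence $\norm{Q_n}_2^2 = O(4^n/n)$ and the theorem follows.

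The one delicate point — the main obstacle — is the transition region near $\theta\sim 1/n$, where $g_n$ falls from $\approx n$ to $O(1/\theta)$: the cutoff must be placed far enough out that the decay rate $\frac{\pi}{4c}$ coming from the bound $g_n\le\frac{\pi n}{2c}$ on $[c/n,\pi]$ is strictly smaller than $\frac12$, which forces $c>\pi/2$, so the choice $c=2$ works. Everything else is a crude ``maximum times length'' estimate, and it is precisely this crudeness on $[0,2/n]$ — in place of a Gaussian/Euler–Maclaurin evaluation of the peak — that loses a factor of $\sqrt n$ relative to Theorem \ref{Qnestimate} and produces exactly the stated bound $O(2^n/\sqrt n)$. (In fact, running the same estimate directly on $\norm{Q_n}_1$ rather than passing through Hölder would give the slightly stronger $O(2^n/n)$.)
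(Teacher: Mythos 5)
Your argument is correct, but it is not the proof the paper gives. The paper applies H\"older's inequality with $n$ factors, $\norm{\prod_{k=1}^n\cos(k\theta)}_1\le\prod_{k=1}^n\norm{\cos(kt)}_n$; by periodicity each factor is the same, namely $(\frac{2}{\pi}\int_0^{\pi/2}\cos^n t\,dt)^{1/n}$, so the whole product collapses to a single Wallis integral, whose asymptotic $\int_0^{\pi/2}\cos^n t\,dt\sim\sqrt{\pi/2}\,(n+2)^{-1/2}$ (via the Gamma-function formula and Stirling) immediately supplies the $n^{-1/2}$ factor. You instead pass to $L^2$ by Cauchy--Schwarz and bound $|Q_n(\theta)|^2$ pointwise using $\log|\cos x|\le\frac14(\cos 2x-1)$ together with the closed form of the Dirichlet kernel to control $g_n(\theta)=\sum_{k=1}^n\cos(k\theta)$; your split at $\theta=2/n$, the trivial bound $g_n\le n$ on the peak, the bound $g_n(\theta)\le\pi/(2\theta)\le\pi n/4$ on the tail, and the check that $\pi/8<1/2$ are all sound, so $\norm{Q_n}_2^2=O(4^n/n)$ and the theorem follows. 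The paper's route is shorter and needs only the classical Wallis asymptotics; yours is longer but self-contained, reuses the $\log|\cos x|$ inequality already established in the proof of Theorem \ref{Qnestimate}, and, as you correctly observe, yields the sharper $O(2^n/n)$ if run directly on the $L^1$ norm instead of passing through Cauchy--Schwarz (still short of the true order $2^n n^{-3/2}$ given by Theorem \ref{Qnestimate}).
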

\begin{proof}
By H\"older's inequality,
\[
\norm{\prod_{k=1}^n \cos(k\theta)}_1 \leq \prod_{k=1}^n \norm{\cos (kt)}_n.
\]
For each $k$,
\[
\int_0^{2\pi} |\cos (kt)|^n dt
=4\int_0^{\pi/2} \cos^n t dt.
\]
Let $G_n=\int_0^{\pi/2} \cos^n t dt$. Using integration by parts and induction (doing the even and odd cases separately) one can show that
\[
G_n=\frac{\sqrt{\pi}}{2} \frac{\Gamma(\frac{n+1}{2})}{\Gamma(\frac{n}{2}+1)}.
\]
Then using Stirling's approximation and the fact that $\lim_{n \to \infty} (1-\frac{1}{n})^n=e^{-1}$ we get
\[
G_n \sim \sqrt{\frac{\pi}{2}} \frac{1}{\sqrt{n+2}}.
\]
\end{proof}

Following Pribitkin's  \cite{MR2471621}, which gives an upper bound on
the number of partitions of $j$ with at most
$n$ parts, Bidar \cite{bidar} gives an upper bound on
$\widehat{Q_n}(j)$ involving the dilogarithm function $\Li$. However, take $n$ to be even, and let
$j=\lfloor \frac{n(n+1)}{4} \rfloor$. We  compute that the exponential term in Bidar's upper bound for
$\widehat{Q_n}(j)$ is  $e^{Ln}$, with
\[
L \geq \frac{\pi}{2\sqrt{3}}-\frac{\sqrt{3}}{2\pi} \Li\Big(\exp\Big(-\frac{\pi}{\sqrt{3}}\Big)\Big)
=0.8599790113\ldots.
\]
But $\log 2=0.6931471805\ldots$. Thus here Bidar's bound is worse than the bound
$\widehat{Q_n}(j) \leq \norm{Q_n}_1 \leq \norm{Q_n}_\infty = 2^n$.

In the following theorem we show that for $j$ sufficiently close to $\frac{n(n+1)}{4}$ the Fourier coefficient $\widehat{Q_n}(j)$ is close to $2^n \sqrt{\frac{6}{\pi}} n^{-3/2}$,
and that $\widehat{Q_n}(j)$ is upper bounded by $2^n \sqrt{\frac{6}{\pi}} n^{-3/2}(1+o(1))$ for all $j$, from which we get
$\norm{\widehat{Q_n}}_\infty \sim 2^n \sqrt{\frac{6}{\pi}} n^{-3/2}$. We use the bounds on $\Psi_n(\theta)$ that we established in our
proof of Theorem \ref{Qnestimate}.

\begin{theorem}
We have 
\[
\norm{\widehat{Q_n}}_\infty \sim 2^n \sqrt{\frac{6}{\pi}} n^{-3/2}.
\]
\label{Qninfinity}
\end{theorem}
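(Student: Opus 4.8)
The plan is to extract $\widehat{Q_n}(j)$ directly from the integral representation and compare it to the $L^2$-concentration estimate that was already established in the proof of Theorem~\ref{Qnestimate}. Since $\widehat{Q_n}(j) = \frac{1}{2\pi}\int_0^{2\pi} Q_n(\theta) e^{-ij\theta}\,d\theta$, one has the trivial upper bound $|\widehat{Q_n}(j)| \leq \norm{Q_n}_1$; but this is too crude, since $\norm{Q_n}_1 \sim \sqrt{6/\pi}\,2^n n^{-3/2}$ already by Theorem~\ref{Qnestimate} with $p=1$, and the claimed $\ell^\infty$ asymptotic has the \emph{same} leading constant. So the real content is that the mass of $Q_n$ is concentrated near $\theta=0$, where the oscillatory factor $e^{-ij\theta}$ is essentially constant when $j$ is chosen near the centre of mass $\frac{n(n+1)}{4}$, so that almost no cancellation occurs and the lower bound matches.

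The key steps, in order, would be as follows. First I would rescale via $\theta = 2\pi\phi$ and use \eqref{cosineproduct} to write $\widehat{Q_n}(j)$ in terms of $\Psi_n(\phi)=\prod_{k=1}^n 2|\cos(\pi k\phi)|$ together with the phase $e^{\frac{iN\theta}{2}}$ coming from $e^{iN\pi\phi}$; combining this with $e^{-ij\theta}$ gives an oscillatory factor $e^{2\pi i(N/2 - j)\phi}$. I would then choose $j$ to make $N/2 - j$ as small as possible, i.e.\ $j=\lfloor \frac{n(n+1)}{4}\rfloor$, so that $|N/2 - j|\leq 1$. Second, I would split the integral at $\gamma=n^{-4/3}$ exactly as in Theorem~\ref{Qnestimate}: on $[\gamma,\tfrac12]$ the bound $\Psi_n(\phi)=O(2^n\exp(-\tfrac{\pi^2 n^{1/3}}{6}))$ established there shows this part contributes an exponentially negligible amount, even after integrating against the bounded oscillatory factor. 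Third, on the core $[0,\gamma]$ I would insert the Gaussian approximation $\Psi_n(\phi)=2^n\exp(-\tfrac{\pi^2}{6}n^3\phi^2)(1+O(n^{-1/3}))$ from the proof of Theorem~\ref{Qnestimate}; on this interval $\phi\leq n^{-4/3}$ so $(N/2-j)\phi = O(n^{-4/3})=o(1)$, whence $e^{2\pi i(N/2-j)\phi}=1+o(1)$ uniformly. The core integral thus reduces to $2\int_0^\gamma 2^n\exp(-\tfrac{\pi^2}{6}n^3\phi^2)\,d\phi$, which by the same Gaussian computation (the $p=1$ case of the evaluation in Theorem~\ref{Qnestimate}) equals $2^n\sqrt{6/\pi}\,n^{-3/2}(1+o(1))$. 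This yields, for this distinguished $j$, the matching lower asymptotic $\widehat{Q_n}(j)\sim 2^n\sqrt{6/\pi}\,n^{-3/2}$.

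For the upper bound over \emph{all} $j$, the estimate is cleaner: since $Q_n(\theta)=2^n e^{iN\theta/2}\prod\cos(k\theta/2)$ has nonnegative coefficients in the sense that $|\widehat{Q_n}(j)|\leq \frac{1}{2\pi}\int_0^{2\pi}\Psi_n(\theta/2)\,d\theta = \norm{Q_n}_1$, I would simply invoke $\norm{Q_n}_1 \sim \sqrt{6/\pi}\,2^n n^{-3/2}$ from Theorem~\ref{Qnestimate}; this gives $\widehat{Q_n}(j)\leq 2^n\sqrt{6/\pi}\,n^{-3/2}(1+o(1))$ for every $j$. Combining the uniform upper bound with the matching lower bound attained at $j=\lfloor\frac{n(n+1)}{4}\rfloor$ gives $\norm{\widehat{Q_n}}_\infty \sim 2^n\sqrt{6/\pi}\,n^{-3/2}$.

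The main obstacle I anticipate is justifying that the oscillatory factor really is harmless on the core interval and genuinely negligible outside it. The bookkeeping of the phase $e^{2\pi i(N/2-j)\phi}$ must be handled with care: the whole argument depends on being able to choose $j$ so that $N/2-j$ is bounded, which works because the $\frac{iN\theta}{2}$ phase recenters the problem; if one forgot this recentering the stationary-phase heuristic would fail. Controlling the error term when multiplying the $(1+O(n^{-1/3}))$ Gaussian approximation by $(1+o(1))$ from the phase is routine, but one should verify the two error sources combine without spoiling the leading constant.
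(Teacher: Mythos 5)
Your proposal is correct and follows essentially the same route as the paper: both write $\widehat{Q_n}(j)$ as an integral of $\Psi_n$ against the recentred oscillatory factor (the paper's $\cos(\pi(N-2j)\theta)$ is your $e^{2\pi i(N/2-j)\phi}$ taken in real part), split at $\gamma=n^{-4/3}$, reuse the Gaussian core and exponentially small tail from Theorem~\ref{Qnestimate}, and close with the upper bound $\norm{\widehat{Q_n}}_\infty\leq\norm{Q_n}_1$. The only cosmetic difference is that you evaluate the lower bound at the single index $j=\lfloor N/2\rfloor$, whereas the paper shows the same asymptotic for all $j$ with $|N-2j|=o(n^{4/3})$; either suffices for the theorem.
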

\begin{proof}
We can check that 
\[
\widehat{Q_n}(j)=2 \int_0^{1/2} \cos(\pi(N-2j)\theta) \prod_{k=1}^n 2 \cos(\pi k\theta) d\theta,
\qquad N=\frac{n(n+1)}{2}.
\]
Following the proof of Theorem \ref{Qnestimate}, with $\Psi_n(\theta)=\prod_{k=1}^n 2|\cos (\pi k\theta)|$ and $\gamma=n^{-4/3}$, we get
\[
\widehat{Q_n}(j) = 2 \int_0^\gamma \cos(\pi(N-2j)\theta) \Psi_n(\theta) d\theta+O\Big(2^n \exp\Big(-\frac{\pi^2 n^{1/3}}{6}\Big)\Big).
\]

We have from our proof of Theorem \ref{Qnestimate} that
\[
\int_0^\gamma \Psi_n(\theta)d\theta
=2^n \sqrt{\frac{3}{2\pi}} n^{-3/2}\cdot (1+O(n^{-1/6}).
\]
Using this and the inequality $\cos(x) \geq 1-\frac{x^2}{2}$ for $0 \leq x \leq 1$,
we have for $|N-2j|=o(n^{4/3})$ that
\begin{eqnarray*}
\widehat{Q_n}(j)&=&2\int_0^\gamma \Psi_n(\theta) d\theta+o\Big(\int_0^\gamma \Psi_n(\theta) d\theta\Big)
+O\Big(2^n \exp\Big(-\frac{\pi^2 n^{1/3}}{6}\Big)\Big)\\
&=&2^n \sqrt{\frac{6}{\pi}} n^{-3/2}(1+o(1)).
\end{eqnarray*}

But by Theorem \ref{Qnestimate} we have 
$\norm{\widehat{Q_n}}_\infty \leq \norm{Q_n}_1 \sim 2^n \sqrt{\frac{6}{\pi}} n^{-3/2}$. It follows that
$\norm{\widehat{Q_n}}_\infty \sim 2^n \sqrt{\frac{6}{\pi}} n^{-3/2}$.
\end{proof}

In the above proof we showed that $\widehat{Q_n}(j)$ is $2^n \sqrt{\frac{6}{\pi}} n^{-3/2}(1+o(1))$ for 
$|N-2j|=o(n^{4/3})$ and that for other $j$, $\widehat{Q_n}(j)$ is upper bounded by
$2^n \sqrt{\frac{6}{\pi}} n^{-3/2}(1+o(1))$, but we didn't establish whether $\widehat{Q_n}(j)$ is 
close
to $2^n \sqrt{\frac{6}{\pi}} n^{-3/2}$ for other $j$ or is substantially smaller. Generally, a sequence $a_0,\ldots,a_N$ is said to be
{\em symmetric} if $a_k=a_{N-k}$ for all $0 \leq k \leq N$, and is said to be {\em unimodal} if there is some $m$
such that $a_0 \leq a_1 \leq \cdots \leq a_m$ and $a_N \leq a_{N-1} \leq \cdots a_m$.
If $a_0,\ldots,a_N$ is symmetric and unimodal then for $m=\lfloor \frac{N}{2} \rfloor$, 
the term $a_m$ is equal to the maximum of the sequence.
For $N=\frac{n(n+1)}{2}$, 
there is a bijection between the set of partitions of $j$ into distinct parts each $\leq n$ and the set
of partitions of $N-j$ into distinct parts each $\leq n$: for each partition we take the positive integers $\leq n$ not in this partition. Thus  $\widehat{Q_n}(j)=\widehat{Q_n}(N-j)$, i.e.  the sequence $\widehat{Q_n}(j)$ is symmetric. 
Hughes and Van der Jeugt \cite{MR1136915} show using the representation theory
of Lie algebras that 
the sequence $\widehat{Q_n}(j)$ is unimodal, and survey how to use these methods to prove the unimodality of other sequences. 
The unimodality of $\widehat{Q_n}(j)$ can be also be proved without using Lie algebraic methods \cite{MR656013}.

\section{Conclusions}
\label{conclusion}
It remains to determine the asymptotic behavior of the $\ell^p$ norms of $\widehat{P_n}$ and 
$\widehat{Q_n}$ for $1 \leq p <\infty$.
Let $N=\frac{n(n+1)}{2}$ and let $L=\frac{(2N-k)w_0}{n}-\frac{1}{4}n$, 
with $w_0$ as defined following Theorem \ref{wright}. Wright's proof  \cite{MR0163891} of our Theorem \ref{wright} shows that
if $k=\frac{N}{2}+o(n^{3/2})$ then 
\[
\widehat{P_n}(k)=\frac{Be^{Kn}}{n} \cos(2\pi L) + o\Big(\frac{e^{Kn}}{n} \Big),
\]
with $K$ and $B$  as defined in Theorem \ref{wright}. 
Furthermore, Wright \cite{MR0163892}  proves a result that specializes to the following.
Take $C$ as defined in Theorem \ref{Lpestimate}.
 If
$m=k-\frac{N}{2}=o(n^{5/3})$ then
\[
\widehat{P}_n(k)=\frac{B}{n} \exp\Big(Kn-\frac{\pi^2 m^2}{C^2 n^2}\Big)\Big(\cos\Big(\frac{n \pi}{2}+2\pi mn^{-1} w_0\Big)+o(1)\Big).
\]

\begin{figure}
\includegraphics{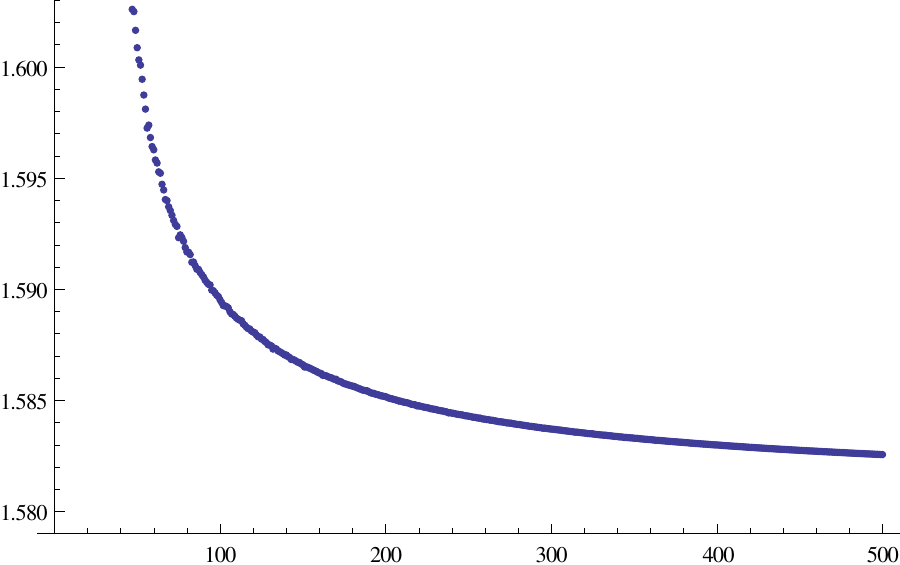}
\caption{$\frac{\norm{\widehat{P_n}}_1}{e^{Kn} n^{1/2}}$, for $n=1,\ldots,500$}
\label{ell1Pn1to500}
\end{figure}

\begin{figure}
\includegraphics{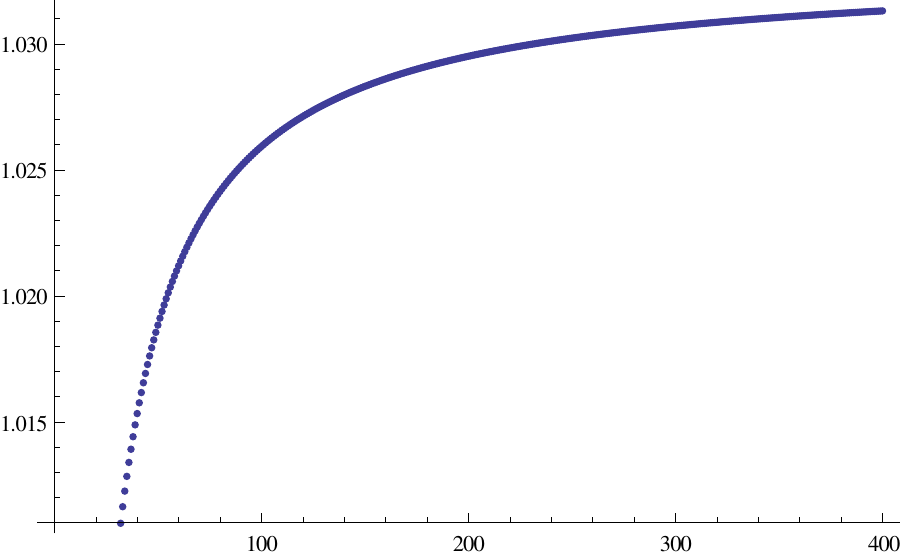}
\caption{$\frac{\norm{\widehat{Q_n}}_3}{2^n n^{-1}}$, for $n=1,\ldots,400$}
\label{ell3Qn1to400}
\end{figure}

If $n^{3/2}$ of the Fourier coefficients of $P_n$ have magnitude on the order of $\frac{e^{Kn}}{n}$
and the other Fourier coefficients of $P_n$ are relatively negligible, then
$\norm{\widehat{P_n}}_p$ would have order of magnitude
\begin{equation}
e^{Kn} n^{\frac{3}{2p}-1}.
\label{Pnorder}
\end{equation}
For $p=2$ we have from Parseval's theorem that $\norm{\widehat{P_n}}_2=\norm{P_n}_2$,
and by Theorem \ref{Lpestimate}, $\norm{P_n}_2 \sim 2^{-3/4} \pi^{-1/4} BC^{1/2} e^{Kn} n^{-1/4}$,
which is consistent with $\norm{\widehat{P_n}}_p$ having order of magnitude \eqref{Pnorder}.
In
Figure \ref{ell1Pn1to500} we plot $\frac{\norm{\widehat{P_n}}_1}{e^{Kn} n^{1/2}}$ for $n=1,\ldots,500$.

Since $Q_n$ has nonnegative Fourier coefficients, $Q_n(0)=\norm{\widehat{Q_n}}_1$, and
so $\norm{\widehat{Q_n}}_1=2^n$.  If $n^\alpha$ of the Fourier coefficients of $Q_n$ have magnitude on
the order of $2^n n^{-3/2}$ (which from Theorem \ref{Qninfinity} is the order of magnitude of $\norm{\widehat{Q_n}}_\infty$), then the identity $\norm{\widehat{Q_n}}_1=2^n$ implies that
$\alpha=\frac{3}{2}$. Then $\norm{\widehat{Q_n}}_p$ would have order of magnitude
\begin{equation}
2^n n^{\frac{3}{2p}-\frac{3}{2}}.
\label{Qnorder}
\end{equation}
By Theorem \ref{Qnestimate}, we have $\norm{Q_n}_2 \sim \Big( \frac{3}{\pi} \Big)^{\frac{1}{4}} 2^n n^{-3/4}$, and so by Parseval's theorem, $\norm{\widehat{Q_n}}_2 \sim \Big( \frac{3}{\pi} \Big)^{\frac{1}{4}} 2^n n^{-3/4}$, which is consistent with $\norm{\widehat{Q_n}}_p$ having order
of magnitude \eqref{Qnorder}. In Figure \ref{ell3Qn1to400} we plot
$\frac{\norm{\widehat{Q_n}}_3}{2^n n^{-1}}$ for $n=1,\ldots,400$.

\bibliographystyle{amsplain}
\bibliography{norms}

\providecommand{\bysame}{\leavevmode\hbox to3em{\hrulefill}\thinspace}
\providecommand{\MR}{\relax\ifhmode\unskip\space\fi MR }
\providecommand{\MRhref}[2]{%
  \href{http://www.ams.org/mathscinet-getitem?mr=#1}{#2}
}
\providecommand{\href}[2]{#2}
\begin{thebibliography}{10}

\bibitem{MR2651525}
Jordan Bell, \emph{A summary of {E}uler's work on the pentagonal number
  theorem}, Arch. Hist. Exact Sci. \textbf{64} (2010), no.~3, 301--373.

\bibitem{bidar}
Mohammadreza Bidar, \emph{Partition of an integer into distinct bounded parts:
  Identities and bounds}, Integers \textbf{12} (2012), no.~3, 445--457.

\bibitem{MR1790921}
Holly Carley and Xin Li, \emph{The modulus of polynomials with zeros at the
  roots of unity}, Amer. Math. Monthly \textbf{107} (2000), no.~8, 742--748.

\bibitem{MR0197427}
H.~Davenport and H.~Halberstam, \emph{The values of a trigonometrical
  polynomial at well spaced points}, Mathematika \textbf{13} (1966), 91--96.

\bibitem{MR1907123}
Morley Davidson, \emph{On the {F}arey series and outer zeros of partition
  polynomials}, J. Math. Anal. Appl. \textbf{269} (2002), no.~2, 431--443.

\bibitem{dieudonne}
Jean Dieudonn\'e, \emph{Calcul infinit\'esimal}, Hermann, Paris, 1968.

\bibitem{MR937933}
G.~Freiman and H.~Halberstam, \emph{On a product of sines}, Acta Arith.
  \textbf{49} (1988), no.~4, 377--385.

\bibitem{MR1297011}
G.~Freiman and J.~Pitman, \emph{Partitions into distinct large parts}, J.
  Austral. Math. Soc. Ser. A \textbf{57} (1994), no.~3, 386--416.

\bibitem{MR2445243}
G.~H. Hardy and E.~M. Wright, \emph{An introduction to the theory of numbers},
  fifth ed., Oxford University Press, 1979.

\bibitem{MR1136915}
J.~W.~B. Hughes and J.~Van~der Jeugt, \emph{Unimodal polynomials associated
  with {L}ie algebras and superalgebras}, J. Comput. Appl. Math. \textbf{37}
  (1991), no.~1-3, 81--88.

\bibitem{katznelson}
Yitzhak Katznelson, \emph{An introduction to harmonic analysis}, third ed.,
  Cambridge Mathematical Library, Cambridge University Press, 2004.

\bibitem{MR1684685}
D.~S. Lubinsky, \emph{The size of {$(q;q)_n$} for {$q$} on the unit circle}, J.
  Number Theory \textbf{76} (1999), no.~2, 217--247.

\bibitem{MR621019}
O.~Carruth McGehee, Louis Pigno, and Brent Smith, \emph{Hardy's inequality and
  the {$L^{1}$} norm of exponential sums}, Ann. of Math. (2) \textbf{113}
  (1981), no.~3, 613--618.

\bibitem{MR634431}
John Milnor, \emph{Hyperbolic geometry: the first 150 years}, Bull. Amer. Math.
  Soc. (N.S.) \textbf{6} (1982), no.~1, 9--24.

\bibitem{MR2477513}
{\'E}lie Mosaki, \emph{Partitions sans petites parts. {II}}, J. Th\'eor.
  Nombres Bordeaux \textbf{20} (2008), no.~2, 431--464.

\bibitem{MR656013}
A.~M. Odlyzko and L.~B. Richmond, \emph{On the unimodality of some partition
  polynomials}, European J. Combin. \textbf{3} (1982), no.~1, 69--84.

\bibitem{MR2471621}
Wladimir de~Azevedo Pribitkin, \emph{Simple upper bounds for partition
  functions}, Ramanujan J. \textbf{18} (2009), no.~1, 113--119.

\bibitem{MR0163890}
C.~Sudler, Jr., \emph{An estimate for a restricted partition function}, Quart.
  J. Math. Oxford Ser. (2) \textbf{15} (1964), 1--10.

\bibitem{MR0057279}
G.~Szekeres, \emph{Some asymptotic formulae in the theory of partitions. {II}},
  Quart. J. Math. Oxford Ser. (2) \textbf{4} (1953), 96--111.

\bibitem{titchmarsh}
E.~C. Titchmarsh, \emph{The theory of functions}, second ed., Oxford University
  Press, 2002.

\bibitem{MR0163892}
E.~M. Wright, \emph{A closer estimate for a restricted partition function},
  Quart. J. Math. Oxford Ser. (2) \textbf{15} (1964), 283--287.

\bibitem{MR0163891}
\bysame, \emph{Proof of a conjecture of {S}udler's}, Quart. J. Math. Oxford
  Ser. (2) \textbf{15} (1964), 11--15.

\end{thebibliography}

\end{document}